\def\eps{\varepsilon}
\def\SC{\mathsf{TOC}}
\def\E{\mathbb{E}}
\def\sF{\mathcal{F}}
\newcommand{\beqn}[1]{\begin{equation}\label{#1}}
	\newcommand{\eeqn}{\end{equation}}
\def\EE{\mathbb{E}}
\def\R{\mathbb{R}}
\def\P{\mathbb{P}}
\newcommand{\one}{{\mathbbm{1}}}
\newcommand{\kval}{{C\max\{\frac{1}{\lambda},\frac{\ln(2)}{a}\}, \text{ $C$ is a universal constant}}}
\newcommand\norm[1]{\left\lVert{#1}\right\rVert}
\newtheorem{theorem}{Theorem}
\newtheorem{corollary}[theorem]{Corollary}
\newtheorem{definition}{Definition}
\newtheorem{remark}{Remark}
\newtheorem{lemma}{Lemma}
\newtheorem{assumption}{Assumption}
\newtheorem{proposition}{Proposition}
\newcommand{\blue}[1]{{\color{blue}{#1}}}
\begin{document}

%
%

\title{First- and Second-Order Stochastic Adaptive Regularization with Cubics: High Probability Iteration and Sample Complexity\footnotemark[1]}

\author{Katya Scheinberg\footnotemark[2]
   \and Miaolan~Xie\footnotemark[3]}

\footnotetext[1]{A preliminary version of this work appeared in the Proceedings of the 2023 Winter Simulation Conference under the title ``Stochastic Adaptive Regularization Method with Cubics: A High-Probability Complexity Bound" \cite{scheinberg2023stochastic}. That version contains the first-order iteration complexity result. 
The current paper presents an extended and unified treatment, including the second-order iteration complexity and both the first- and second-order sample complexity results.}
\footnotetext[2]{School of Industrial and Systems Engineering, Georgia Tech, Atlanta, GA, USA; E-mail: katyascheinberg@gmail.com}
\footnotetext[3]{Edwardson School of Industrial Engineering, Purdue University, 
West Lafayette, IN, USA; E-mail: xie537@purdue.edu}

\maketitle

\begin{abstract}
 We present high-probability (and expectation) complexity bounds for two versions of stochastic adaptive regularization methods with cubics (SARC), also known as regularized Newton methods. The first algorithm aims to find first-order stationary points, while the second targets second-order optimality conditions. Both methods employ stochastic zeroth-, first-, and second-order oracles with specific accuracy and reliability requirements. These oracles, which have been previously used with other stochastic adaptive methods like trust-region and line-search algorithms, are applicable to various optimization settings including expected risk minimization and simulation optimization. In this paper, we establish the first high-probability iteration and sample complexity bounds for both first- and second-order SARC algorithms. Our analysis demonstrates that as in the deterministic case, they outperform other stochastic adaptive methods.
\end{abstract}

\section{Introduction}

We are interested in unconstrained optimization problems of the form
	\[
	\min_{x \in \mathbb{R}^m} \phi(x),
	\]
	where $\phi$ is possibly nonconvex and satisfies the following conditions:  	



	\begin{assumption}\label{ass:Lip_nablaf}\quad$\phi$ is  bounded from below by a constant $\phi^*$, is twice continuously differentiable, and has globally $L$-Lipschitz continuous gradient and  $L_H$-Lipschitz continuous  Hessian.
	\end{assumption}
	We present and analyze two versions of stochastic adaptive cubic regularization algorithms. The first is designed to compute a point $x$ such that $\norm{\nabla \phi(x)}\leq \epsilon$, for some $\epsilon>0$. The second is designed to compute a point $x$ satisfying both $\left\|\nabla \phi\left(x\right)\right\| \leq \epsilon$ and $\lambda_{\text{min}}(\nabla^2\phi(x)) \geq -\sqrt{\epsilon}$, where $\lambda_{\min}(\cdot)$ is the minimum eigenvalue of a matrix. Both algorithms are under the setting that $\phi(x)$ or its derivatives are not computable exactly. Specifically, we assume  access to stochastic zeroth-, first- and second-order oracles, which are defined as follows. 	
   
	{\bf Stochastic zeroth-order oracle} ($\mathsf{SZO}(\epsilon_f, \lambda,a)$). 
	Given  a point $x$,  the oracle computes  $f(x,\Xi(x))$, 
	where $\Xi(x)$ is a random variable, whose distribution may depend on $x$, $\epsilon_f, \lambda$ and $a$, that satisfies 
	\begin{equation}\label{eq:zero_order}
	{\mathbb E_{\Xi(x)}}\left [ |{\phi(x) - f(x, \Xi(x))}|\right ]\leq \epsilon_f \ \text{~and~ }
	{\mathbb P_{\Xi(x)}}\left ( |{\phi(x) - f(x, \Xi(x))}|< t\right ) \geq 1-e^{\lambda (a-t)}, 
	\end{equation}
	 for any $t>0$.

We view  $x$ as the input to the oracle,  $f(x,\Xi(x))$ as the output and the values $(\epsilon_f, \lambda, a)$ as values intrinsic to the oracle. 
Thus $|f(x,\Xi(x))-\phi(x)|$ is a sub-exponential random variable with parameters 
	$(\lambda,a)$, whose mean is bounded by some constant $\epsilon_f>0$. 
Note that the definition of $\mathsf{SZO}$ implies that 
\begin{equation}\label{eq:zero_order2}
	\ {\mathbb E_\Xi}\left [\exp\left\{\tau (E(x)-\EE[E(x)])\right\}\right ]\leq \exp\left(\frac{\tau^2K^2}{2}\right), \quad \forall \tau\in \left[0,\frac{1}{K}\right],
	\end{equation}	
	where $K=\kval$. This follows from the sub-exponential property \eqref{eq:zero_order} by applying Proposition 2.7.1 of \cite{vershynin2018high}.

	
	{\bf Stochastic first-order oracle} ($\mathsf{SFO}(\kappa_g)$.
	Given a point $x$ and constants $\mu_1> 0$, $\delta_1\in [0,\frac 1 2)$,
	the oracle computes $g(x,\Xi^1(x))$, such that
	\begin{equation}\label{sfo}
	\P_{\Xi^1(x)}(\|\nabla \phi(x)-g(x,\Xi^1(x))\|\leq  \kappa_g\mu_1)\geq 1-\delta_1,
	\end{equation}
	where $\Xi^1(x)$ is a random variable whose distribution may depend on $x$, $\mu_1$, $\delta_1$ and $\kappa_g$.
	 We view $x$, $\mu_1$ and $\delta_1$ as inputs to the oracle, while $\kappa_g$ is intrinsic to the oracle. 
	
	{\bf Stochastic second-order oracle} ($\mathsf{SSO}(\kappa_H)$). Given a point $x$ and constants $\mu_2> 0$, $\delta_2\in [0,\frac 1 2)$,
	the oracle computes $H(x,\Xi^2(x))$, such that
\begin{equation}\label{sso}
\P_{\Xi^2(x)}(\|\nabla^2 \phi(x)-{H}(x,\Xi^2(x))\|\leq  \kappa_H\mu_2)\geq 1-\delta_2,
\end{equation}
where $\Xi^2(x)$ is a random variable whose distribution may depend on $x$, $\mu_2$, $\delta_2$ and $\kappa_H$. The norm on the matrix is the operator norm.
$x$, $\mu_2$ and $\delta_2$ are inputs to the oracle, while $\kappa_H$ is intrinsic to the oracle. 	

Wherever possible, we will omit the dependence on $x$ and write $\Xi, \Xi^1$, and $\Xi^2$ instead of $\Xi(x), \Xi^1(x)$, and $\Xi^2(x)$, and we will use $f(x), g(x)$ and $H(x)$ to denote the outputs of the stochastic oracles  for brevity.

\medskip\noindent\textbf{Related work.} 

\blue{
}

Several stochastic adaptive optimization methods have been studied in recent years under various stochastic oracle assumptions, similar to the ones 
we present above. Specifically, \cite{cartis2018global} bounds the expected iteration complexity for an adaptive step search (line search) method and an adaptive cubic regularization method, with a similar first-order oracle, but with an exact zeroth-order oracle. In \cite{paquette2020stochastic},  an expected iteration complexity result is derived for a variant of a step search method under a stochastic zeroth-order oracle. In \cite{jinhigh}, the results of  \cite{paquette2020stochastic} are strengthened under a somewhat more restrictive zeroth-order oracle, which is similar to the $\mathsf{SZO}$ in this paper, and a high probability complexity bound is derived. 

Similarly, in \cite{bandeira2014convergence},  \cite{gratton2018complexity}, a trust region method is analyzed under essentially $\mathsf{SFO}$ and $\mathsf{SSO}$, but with an exact zeroth-order oracle. Later in \cite{STORM,blanchet2019convergence} an expected complexity bound is derived for a trust region method with a stochastic zeroth-order oracle. 
In \cite{cao2022first} a high probability iteration complexity bound for first- and second-order stochastic trust region methods is derived under  the same oracles we use. 
Recently, the same oracles were used within a stochastic quasi-Newton method in \cite{qNewton}, and a stochastic SQP-based method for nonlinear equality constrained problems in \cite{sqp}.

Adaptive regularization with cubics (ARC) methods are theoretically superior alternatives to line search and trust region methods, when applied to 
deterministic smooth functions, 
 because of their optimal complexity of $O(\epsilon^{-3/2})$ vs $O(\epsilon^{-2})$ for finding $\epsilon$ stationary points \cite{cagotoPII,cgt38}. 
 There are many  variants of adaptive cubic regularization methods under various  assumptions and requirements on the function value, gradient, and Hessian estimates. 
Specifically, in \cite{cagotoPI,liu2018stochastic,bellavia2019adaptive,wang2019note,kohler2017sub,park2020combining}, the oracles are assumed to be deterministically bounded, with adaptive  magnitude or errors. 
In \cite{cartis2018global,bellavia2022stochastic,bellavia2022adaptive,bellavia2020stochastic}, bounds on expected complexity are provided under 
exact or deterministically  bounded zeroth-order oracle and the  gradient and Hessian oracles similar to $\mathsf{SFO}$ and $\mathsf{SSO}$. 
A cubicly regularized method in a fully stochastic setting is  analyzed  in \cite{tripuraneni2018stochastic}. This non-adaptive method relies on known Lipschitz constants of \(\nabla \phi(x)\) and therefore does not require a zeroth-order oracle.
It uses stochastic gradient and Hessian-vector products to find $\epsilon$-approximate first-order stationary points and $(\epsilon, \sqrt{\epsilon})$-approximate second-order stationary points for smooth, nonconvex functions in $\tilde{O}(\epsilon^{-3.5})$ evaluations.
However, the results assume sufficiently accurate stochastic gradient and Hessian estimates at each iteration. The final complexity bound only applies with probability that this holds true. No expected complexity bound can thus be derived. 

In addition to the methods discussed earlier, several works have explored cubic regularization methods that operate in random subspaces. These methods, such as those proposed by \cite{hanzely2020stochastic, zhaocubic, cartis2025random}, typically require exact evaluations of function values, gradients, or Hessians. Additionally, there are stochastic variance reduction-based cubic regularization methods \cite{pmlr-v80-zhou18d, wang2019stochastic, zhou2020stochastic, zhang2022adaptive} that are applicable to finite-sum functions. The final complexity bounds of these methods depend on the total number of terms in the finite sum.


\medskip\noindent\textbf{Our contributions.}
In this work we provide the first comprehensive analysis of a stochastic Adaptive regularization with cubics method (SARC) that establishes both first- and second-order iteration complexity bounds and sample complexity bounds with high probability and in expectation. Our method allows for: 
\begin{enumerate}
\item Stochastic function value estimates that can have arbitrarily large errors, and 
\item Stochastic gradient and Hessian approximations whose error is bounded by an adaptive quantity with sufficiently high probability,  but otherwise can be arbitrarily large. 
\end{enumerate}
Our work is the first to establish such first- and second-order iteration complexity bounds that match the deterministic complexity of \(O(\epsilon^{-3/2})\) in this setting with an overwhelmingly high probability. We demonstrate that the stopping times where an approximate stationary point is found are sub-exponential random variables, scaling as \(O(\epsilon^{-3/2})\).
The high probability sample complexity bounds we establish are also novel. We show that our variants of stochastic ARC, while more general than those in prior literature, still maintains its optimal iteration complexity.

The analysis presented here extends the stochastic settings and high probability results in \cite{jinhigh} and \cite{cao2022first} to the framework in \cite{cartis2018global}. However, it requires careful modification of most of the elements of the existing analysis. We point out these modifications in the appropriate places in the paper. 

The oracles used in this paper are essentially the same as in \cite{jinhigh} and \cite{cao2022first}. However, our assumption on the oracles is a bit stronger in this paper than in these two previous works.  In particular, we assume that  $\mathsf{SFO}$ and $\mathsf{SSO}$ are implementable for arbitrarily small values of $\mu_1$ and $\mu_2$, respectively. In contrast, the analysis in  \cite{jinhigh} and \cite{cao2022first} allows for the case when 
these oracles cannot be implemented for arbitrarily small error bounds. We will discuss further in the paper, that even though SARC may impose small values of $\mu_1$ and $\mu_2$, this happens only with small probability. 

We do not discuss the numerical performance of our method in this paper. Although deterministic implementations of ARC can be competitive with trust-region and line search methods when implemented with care, their efficiency in practice is highly dependent on the subproblem solver used. We expect similar behavior in the stochastic case and leave this study as a subject of future research. 

\medskip\noindent\textbf{Paper structure.} 
The rest of the paper is organized as follows:
In Section \ref{sec:first_order}, we introduce the first-order SARC algorithm and analyze its high probability iteration complexity bound.
In Section \ref{sec:second_order}, we extend the analysis to second-order complexity bounds. We introduce the necessary modifications to the algorithm and provide the second-order iteration complexity.
In Section \ref{sec:sample_complexity}, we derive the high probability sample complexity bounds for both the first- and second-order SARC algorithms.

\section{First-Order SARC}\label{sec:first_order}
	The first-order Stochastic Adaptive Regularization with Cubics  (SARC) method is presented below as Algorithm \ref{alg:ARC_Random}.
	At each iteration $k$,  given gradient estimate $g_k$, Hessian estimate $H_k$, and a regularization parameter  $\sigma_k>0$,  the following model  is approximately minimized with respect to $s$ to obtain the trial step $s_k$:
	\begin{equation}\label{cubic}
	m_k(x_k+s)= \phi(x_k)+ s^Tg_k+\frac{1}{2}s^TH_ks+\frac{\sigma_k}{3}\|s\|^3.
	\end{equation}
	The constant term $\phi(x_k)$  is never computed and is used simply for presentation purposes. In the case of SARC, $g_k$ and $H_k$ are computed  using $\mathsf{SFO}$ and $\mathsf{SSO}$ so as to satisfy certain accuracy requirements with sufficiently high probability, which will be specified in \Cref{sec:det_props}. 
 We require the trial step $s_k$ to be an ``approximate minimizer" of $m_k(x_k + s)$ in the sense that it has to satisfy:
		\begin{equation}\label{s-calc}
		(s_{k})^Tg_{k}+(s_{k})^TH_{k}s_{k}+\sigma_k\|s_{k}\|^3=0\,\,{\rm
			and}\,\,(s_{k})^TH_{k}s_{k}+\sigma_k\|s_{k}\|^3\geq 0
	\end{equation}
		and
	\begin{equation}\label{TCs}
		\|\nabla m_{k}(x_k+s_{k})\|\leq \eta \min\left\{1,\|s_{k}\|\right\} \|g_{k}\|,
	\end{equation}	
	where $\eta\in (0,1)$ is a user-chosen constant. The conditions are typical in the literature, e.g., in \cite{cagotoPII}
	 and can be satisfied, for example, using algorithms in \cite{cagotoPI,carmon2019gradient} to approximately minimize the model \eqref{cubic}, as well as by any global minimizer of $m_k(x_k+s)$.

As in any variant of the ARC method, once $s_k$ is computed,  the trial step  is accepted (and $\sigma_k$ is decreased) if the estimated function value of $x_k^+=x_k+s_k$  is sufficiently smaller than that of $x_k$, 
when compared to the model value decrease. We call these iterations \emph{successful}. Otherwise, the trial step is rejected (and $\sigma_k$ is increased). We call these iterations \emph{unsuccessful}.
 In the case of SARC, however, function value estimates are obtained via  $\mathsf{SZO}$ and the step acceptance criterion is modified by adding an ``error correction" term of $2\epsilon_f'$. This is because function value estimates have an irreducible error, so without this correction term, the algorithm may always reject improvement steps.


\begin{algorithm2e}[t]\label{alg:ARC_Random}{\caption {First-Order Stochastic Adaptive Regularization with Cubics (First-Order SARC) }}
		\SetAlgoLined
		\begin{description}
			
			\item[{Input:}]\ Oracles $\mathsf{SZO}(\epsilon_f, \lambda,a)$, $\mathsf{SFO}(\kappa_g)$ and  $\mathsf{SSO}(\kappa_H)$; initial iterate $x_0$, parameters $\gamma \in (0,1)$, $\theta\in (0,1)$, $\delta_1,\delta_2\in [0,\frac 1 2)$, $\sigma_{\min}>0$, $\eta\in (0,1)$, $\mu \geq 0,\epsilon_f^\prime>0$ and $\sigma_0\geq\sigma_{\min}$. 
				
			\item[{Repeat for $k=0, 1, \ldots$}]\ 
				
			\item[{\quad  1. Compute a model trial step $s_k$:}]\ 
			Generate $g_{k}=g(x_k,\xi_{k}^1)$ and $H_{k}=H(x_k,\xi_{k}^2)$ using $\mathsf{SFO}(\kappa_g)$ and $\mathsf{SSO}(\kappa_H)$ with $(\frac{\mu}{\sigma_k},\delta_1)$, and $(\sqrt{\frac{\mu}{\sigma_k}},\delta_2)$ as inputs, respectively.
				Compute  a trial step $s_k$ that satisfies \eqref{s-calc} and \eqref{TCs}  with parameters $\eta$ and $\sigma_k$ at $x_k$.
			\item[{\quad 2. Check sufficient  decrease:}]\ 
			Let $x_k^+=x_k+s_k$. 
			Compute function value estimations $f(x_k) = f(x_k,\xi_k)$ and  $f(x_k^+)=f(x_k^+,\xi_k^+)$ using the $\mathsf{SZO}(\epsilon_f, \lambda,a)$, and set
			\begin{equation}\label{rho}
				\rho_k=\frac{f(x_k)-f(x_k^+)+2\epsilon_f^\prime}{m(x_k)-m_k(x_k^+)},
			\end{equation} 	
			\item[{\quad 3. Update the iterate:}]\ 
			Set 
			\begin{equation}\label{newiterate} 
				x_{k+1}=\left\{ 
				\begin{array}{lr}
					x_k^+\quad {\rm if}\quad \rho_k\geq \theta & \text{[successful iteration]}\\
					x_k, \quad {\rm otherwise} & \text{[unsuccessful iteration]}
				\end{array}
				\right.
			\end{equation}
			\item[{\quad 4. Update the regularization parameter $\sigma_k$:}]\ 
			Set 
			\begin{equation}\label{regularization-update} 
				\sigma_{k+1}=\left\{ 
				\begin{array}{ll}
					\max\left\{\gamma\sigma_k,\sigma_{\min}\right\}
					\quad {\rm if}\quad \rho_k\geq \theta  \\
					\frac{1}{\gamma}\sigma_k,   \quad {\rm otherwise.}
				\end{array}
				\right.
			\end{equation}
		\end{description}
	\end{algorithm2e}
	
	\subsection{Deterministic Properties of Algorithm 1}
\label{sec:det_props}
	
 	Algorithm   \ref{alg:ARC_Random}   generates a  stochastic process and we will analyze it in the next section. In the algorithm, $\epsilon_f^{\prime}$ is an input parameter, which is only required to be some upper bound for the term ${\mathbb E_{\Xi(x)}}\left [ |{\phi(x) - f(x, \Xi(x))}|\right]$, not necessarily the tightest one. We have the  assumption $\epsilon_f \leq \epsilon_f^{\prime}$ as in \Cref{epsilonf}. We will later introduce the relationship between the convergence neighborhood $\epsilon$ and $\mu$, $\epsilon_f^{\prime}$, $\delta_1, \delta_2$ and other algorithm parameters.
 
	First, we state and prove several lemmas that establish the behavior of the algorithm {\em for every realization}. 
	A key concept that will be used in the analysis is the concept of a {\em true iteration}. Let $e_k=| f(x_k)-\phi(x_k)|$ and 
		$e_k^+= |f(x_k^+)-\phi(x_k^+)|$. 
	\begin{definition}[True iteration]
		\label{def:true_noisy}
		We say that iteration $k$ is \textbf{true} if   
			\begin{align}
			\|\nabla \phi(x_k)-g_k\|\leq \kappa_g & \max\left\{ \frac {\mu}{\sigma_k},\|s_k\|^2\right\} \text{, }
			\|(\nabla^2 \phi(x_k)-H_k)s_k\|\leq \kappa_H \max\left\{ \frac {\mu}{\sigma_k},\|s_k\|^2\right\} \label{eq:true1} \\
		& \text{ and }
			e_k+e_k^+ \leq 2\epsilon_f^\prime.
		\end{align}
\end{definition}
\begin{remark}	We will show in \Cref{algo1_imply_suff_acc} that by using $\mathsf{SFO}$ and $\mathsf{SSO}$ with the respective inputs, $\mu_1=\frac {\mu}{\sigma_k}$ in \eqref{sfo} and $\mu_2=\sqrt{\frac {\mu}{\sigma_k}}$ in \eqref{sso}, each iteration of Algorithm \ref{alg:ARC_Random} satifies \eqref{eq:true1} with probability at least $1-\delta_1-\delta_2$.   However, we note that the conditions in \eqref{eq:true1} can be satisfied by using more relaxed inputs (compared to those in the algorithm), where $\mu_1=\max \{\frac {\mu}{\sigma_k}, \|s_k\|^2\}$ in \eqref{sfo}, and $\mu_2=\max \{\frac {\mu}{\sigma_k\|s_k\|}, \|s_k\|\}$ in \eqref{sso}, instead of $\mu_1=\frac {\mu}{\sigma_k}$ and $\mu_2=\sqrt{\frac {\mu}{\sigma_k}}$. Since $s_k$ depends on the output of the oracles, implementing these more relaxed parameters is non-trivial and may require modifications to Algorithm \ref{alg:ARC_Random}. We leave it as a subject of future research.  
 \end{remark}
 
{We will now prove a sequence of results that hold for each realization of Algorithm \ref{alg:ARC_Random}, and are essential for the complexity analysis. 
 The two key results are \Cref{cor1} and \Cref{p-arc:steplength}, where \Cref{cor1} shows that until an $\epsilon$-stationary point is reached, every true iteration with large enough $\sigma_k$ is successful, and \Cref{p-arc:steplength} establishes the lower bound on function improvement on true and successful iterations. \Cref{p-arc:model_decrease,p-arc:sigmamax,lem:gainontrue,p-arc:barepsilon} lay the building blocks for them:
 On every successful iteration, the function improvement is lower bounded in  terms of the norm of the step (Lemma \ref{p-arc:model_decrease}).  There is a threshold value of  $\sigma_k$  where any true iteration  is either always successful or results in a very small step (\Cref{p-arc:sigmamax}). 
  When an iteration is true and the step is not very small, the norm of the step is lower bounded in terms of $\|\nabla \phi(x_k^+)\|$ (\Cref{lem:gainontrue}). Until an $\epsilon$-stationary point is reached, the step cannot be too small on true iterations (\Cref{p-arc:barepsilon}).}

		\begin{lemma}[Improvement on successful iterations]\label{p-arc:model_decrease}
		Consider any realization of Algorithm \ref{alg:ARC_Random}. For each iteration $k$, we have
		\begin{equation}\label{cr:modeldecrease}
			m_k(x_k)-m_k(x_k^+)\geq \frac{1}{6}\sigma_k\|s_k\|^3.
		\end{equation}
		On every successful iteration $k$, we have
		\begin{equation}\label{cr:fctdecrease}
			f(x_k)-f(x_{k+1})\geq \frac{\theta}{6}\sigma_k\|s_k\|^3 -2\epsilon_f^\prime,
		\end{equation} which implies
		\begin{equation}\label{cr:fctdecrease1}
			\phi(x_k)-\phi(x_{k+1})\geq \frac{\theta}{6}\sigma_k\|s_k\|^3 -e_k-e_k^+ -2\epsilon_f^\prime.
		\end{equation} 
		
	\end{lemma}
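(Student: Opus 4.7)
The plan is to prove the three bounds in order, and all of them follow from straightforward manipulation of definitions combined with the approximate-minimizer condition \eqref{s-calc}.

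For the first bound, I would begin by writing $m_k(x_k) - m_k(x_k^+)$ explicitly using the definition of the cubic model \eqref{cubic}, which gives
\[
m_k(x_k) - m_k(x_k^+) = -s_k^T g_k - \tfrac{1}{2} s_k^T H_k s_k - \tfrac{\sigma_k}{3}\|s_k\|^3.
\]
The first equation in \eqref{s-calc} lets me substitute $s_k^T g_k = -s_k^T H_k s_k - \sigma_k \|s_k\|^3$, which after simplification yields
\[
m_k(x_k) - m_k(x_k^+) = \tfrac{1}{2} s_k^T H_k s_k + \tfrac{2}{3}\sigma_k \|s_k\|^3.
\]
The second inequality in \eqref{s-calc}, namely $s_k^T H_k s_k + \sigma_k \|s_k\|^3 \geq 0$, implies $\tfrac{1}{2}s_k^T H_k s_k \geq -\tfrac{1}{2}\sigma_k\|s_k\|^3$, and adding this to $\tfrac{2}{3}\sigma_k\|s_k\|^3$ gives the desired $\tfrac{1}{6}\sigma_k\|s_k\|^3$ lower bound.

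For \eqref{cr:fctdecrease}, I would just use the definition of a successful iteration: $\rho_k \geq \theta$ means
\[
f(x_k) - f(x_k^+) + 2\epsilon_f^\prime \geq \theta \bigl(m_k(x_k) - m_k(x_k^+)\bigr) \geq \tfrac{\theta}{6}\sigma_k\|s_k\|^3,
\]
where the second inequality is \eqref{cr:modeldecrease}. Since on a successful iteration $x_{k+1} = x_k^+$, rearranging gives \eqref{cr:fctdecrease}.

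Finally, \eqref{cr:fctdecrease1} follows from \eqref{cr:fctdecrease} by adding and subtracting $f(x_k)$ and $f(x_{k+1})$ into $\phi(x_k) - \phi(x_{k+1})$ and using the definitions $e_k = |f(x_k) - \phi(x_k)|$ and $e_k^+ = |f(x_k^+) - \phi(x_k^+)|$ to bound the error terms. There is no real obstacle here; the only mildly subtle step is recognizing that both inequalities in \eqref{s-calc} are needed in the first part (one to eliminate the inner product with $g_k$, the other to handle the potentially indefinite quadratic term $s_k^T H_k s_k$), but both are available by assumption on the step calculation.
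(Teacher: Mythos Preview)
Your proposal is correct and follows essentially the same argument as the paper: both use the first condition in \eqref{s-calc} to rewrite the model decrease as $\tfrac{1}{2}s_k^T H_k s_k + \tfrac{2}{3}\sigma_k\|s_k\|^3$, then the second condition in \eqref{s-calc} to lower-bound the quadratic term, and finally derive \eqref{cr:fctdecrease} and \eqref{cr:fctdecrease1} from the sufficient-decrease condition and the definitions of $e_k,e_k^+$. The only difference is cosmetic ordering---the paper states the implications first and proves \eqref{cr:modeldecrease} last.
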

	\begin{proof}
	The proof is similar to the proof of Lemma 3.3 in \cite{cagotoPI}.
	Clearly, \eqref{cr:fctdecrease}
		follows from \eqref{cr:modeldecrease} and the sufficient decrease condition \eqref{rho}-\eqref{newiterate}: $$\frac{f(x_k)-f(x_k^+)+2\epsilon_f^\prime}{m_k(x_k)-m_k(x_k^+)}\geq \theta, $$ 
		and  \eqref{cr:fctdecrease1} follows from the definition of $e_k$ and $e_k^+$.
		
		It remains to prove \eqref{cr:modeldecrease}.
		Combining the first condition on step $s_k$ in \eqref{s-calc}, with the model expression for $s=s_k$,
		we can write 
		\[
		m_k(x_k)-m_k(x_k^+)=\frac{1}{2}(s_k)^TH_ks_k+\frac{2}{3}\sigma_k\|s_k\|^3.
		\]
		The second condition on $s_k$ in \eqref{s-calc} implies
		$(s_k)^TH_ks_k\geq -\sigma_k\|s_k\|^3$. Together with the
		above equation, we obtain \eqref{cr:modeldecrease}.
  
\end{proof}

	\begin{lemma}[Large $\sigma_k$ guarantees success or small step] \label{p-arc:sigmamax}
		Let Assumption \ref{ass:Lip_nablaf} hold. 
		For any realization of  Algorithm \ref{alg:ARC_Random}, if iteration
		$k$ is true,  and if
		\begin{equation}\label{sigmamax}
			\sigma_k\geq  \bar \sigma= \frac{2\kappa_g+\kappa_H +L+L_H}{1-\frac{1}{3}\theta},
		\end{equation}
		then iteration $k$ is either successful or produces $s_k$ such that $\|s_k\|^2< \frac{\mu}{\sigma_k}$. 
	\end{lemma}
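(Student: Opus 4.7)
The plan is to argue by contrapositive: assume the step is not small, $\|s_k\|^2 \geq \mu/\sigma_k$, and show that $\sigma_k \geq \bar\sigma$ then forces $\rho_k \geq \theta$, i.e., the iteration is successful. This assumption is the key step: it collapses the maxima in the true-iteration inequalities \eqref{eq:true1} to $\|s_k\|^2$, so that every error contribution ends up scaling as $\|s_k\|^3$ and aligns cleanly with the cubic-in-$\|s_k\|$ model decrease supplied by \Cref{p-arc:model_decrease}. As a first reduction, using $e_k + e_k^+ \leq 2\epsilon_f^\prime$ in the numerator of $\rho_k$ gives
\[
f(x_k) - f(x_k^+) + 2\epsilon_f^\prime \;\geq\; \phi(x_k) - \phi(x_k^+),
\]
so it suffices to prove $\phi(x_k^+) - m_k(x_k^+) \leq (1-\theta)\bigl[m_k(x_k) - m_k(x_k^+)\bigr]$, having used $m_k(x_k) = \phi(x_k)$.

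For the left-hand side I would Taylor-expand $\phi$ about $x_k$ to second order, controlling the remainder by $\tfrac{L_H}{6}\|s_k\|^3$ via the $L_H$-Lipschitz Hessian, and then substitute $\nabla\phi(x_k) = g_k + (\nabla\phi(x_k)-g_k)$ and $\nabla^2\phi(x_k) = H_k + (\nabla^2\phi(x_k)-H_k)$. Applying \eqref{eq:true1} together with Cauchy--Schwarz under $\|s_k\|^2 \geq \mu/\sigma_k$ bounds the gradient- and Hessian-error contributions by constant multiples of $\|s_k\|^3$, while the cubic regularization piece of $m_k$ contributes $-\tfrac{\sigma_k}{3}\|s_k\|^3$. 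For the right-hand side \Cref{p-arc:model_decrease} supplies the lower bound $(1-\theta)\bigl[m_k(x_k)-m_k(x_k^+)\bigr] \geq \tfrac{(1-\theta)\sigma_k}{6}\|s_k\|^3$. Dividing through by $\|s_k\|^3$, the desired inequality becomes a scalar condition in which the coefficient $(1-\theta/3)/2$ of $\sigma_k$ must dominate a fixed constant built from $\kappa_g$, $\kappa_H$, $L$, and $L_H$, which is exactly what $\sigma_k \geq \bar\sigma$ guarantees.

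The main obstacle is the constant-tracking needed to land on the stated form $\bar\sigma = (2\kappa_g + \kappa_H + L + L_H)/(1-\theta/3)$: each error term has to be matched with a fraction of the cubic model decrease, and the Lipschitz-gradient constant $L$ enters through a uniform bound on the intermediate quadratic term $s_k^T \nabla^2\phi(x_k) s_k$ that cannot be absorbed into the cubic Taylor remainder alone. Two conveniences are worth noting in advance: the approximate-minimization conditions \eqref{TCs} on the trial step play no role here (only \eqref{s-calc} is used, implicitly through \Cref{p-arc:model_decrease}), and the noise parameter $\epsilon_f^\prime$ drops out entirely after the initial reduction, so the threshold $\bar\sigma$ is independent of the zeroth-order oracle.
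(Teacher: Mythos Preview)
Your approach is correct and essentially the same as the paper's: reduce to showing $\phi(x_k^+)-m_k(x_k^+)\le(1-\theta)\bigl[m_k(x_k)-m_k(x_k^+)\bigr]$, Taylor-expand, use \eqref{eq:true1} under the assumption $\|s_k\|^2\ge\mu/\sigma_k$ so all error terms scale as $\|s_k\|^3$, and compare against the model decrease from \Cref{p-arc:model_decrease}. The paper uses the mean-value form of Taylor (remainder constant $L_H/2$) where you propose the integral form ($L_H/6$); either works.

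One point of confusion: you write that ``the Lipschitz-gradient constant $L$ enters through a uniform bound on the intermediate quadratic term $s_k^T\nabla^2\phi(x_k)s_k$.'' It does not. Once you substitute $\nabla^2\phi(x_k)=H_k+(\nabla^2\phi(x_k)-H_k)$, the $\tfrac12 s_k^TH_ks_k$ piece is part of $m_k(x_k^+)$ and cancels exactly, while the error piece is bounded by $\tfrac{\kappa_H}{2}\|s_k\|^3$ via \eqref{eq:true1}. No bound on $s_k^T\nabla^2\phi(x_k)s_k$ itself is ever needed, and a term scaling as $L\|s_k\|^2$ would in fact be fatal here since it cannot be absorbed into a cubic. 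The paper explicitly remarks after the proof that $L$ is superfluous for this lemma and is included in $\bar\sigma$ only because a later result (\Cref{lem:gainontrue}) requires it.
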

	
	\begin{proof}
		
		Clearly, if $\rho_k-1\geq 0$, then $k$ is successful by definition. Let us consider 
		the case when $\rho_k<1$; then if $1-\rho_k\leq 1-\theta$, $k$ is
		successful. We have from \eqref{rho}, that 
		\[
		1-\rho_k=\frac{m_k(x_k)-m_k(x_k^+)- f(x_k)+f(x_k^+)-2\epsilon_f^\prime}{m_k(x_k)-m_k(x_k^+)}.
		\]
		Notice that: 
				\begin{align*}
			&m_k(x_k)-m_k(x_k^+)- f(x_k)+f(x_k^+)-2\epsilon_f^
			\prime 
			= f(x_k^+)-\left(f(x_k)+ s_k^Tg_k+\frac{1}{2}s_k^TH_ks_k+\frac{\sigma_k}{3}\|s_k\|^3\right)-2\epsilon_f^
			\prime \\
			&\leq
			\phi(x_k^+)-\left(\phi(x_k)+ s_k^Tg_k+\frac{1}{2}s_k^TH_ks_k+\frac{\sigma_k}{3}\|s_k\|^3\right)-2\epsilon_f^
			\prime +e_k +e_k^+\leq
			\phi(x_k^+)-\phi(x_k)- s_k^Tg_k-\frac{1}{2}s_k^TH_ks_k-\frac{\sigma_k}{3}\|s_k\|^3.
		\end{align*}
		The second to last inequality follows from the definition of $e_k$ and $e_k^+$, and the last inequality due to the iteration being true.
		
		Taylor expansion and Cauchy-Schwarz inequalities give, for some $\tau\in [x_k,x_k^+]$,
		\[
		\begin{array}{l}
			\phi(x_k^+)-\phi(x_k)- s_k^Tg_k-\frac{1}{2}s_k^TH_ks_k-\frac{\sigma_k}{3}\|s_k\|^3\\[1ex] 
			=[\nabla \phi(x_k)-g_k]^Ts_k
			+\frac{1}{2}(s_k)^T[\nabla ^2\phi(\tau)-\nabla^2\phi(x_k)]s_k +
			\frac{1}{2}(s_k)^T[\nabla ^2\phi(x_k)-H_k]s_k-\frac{1}{3}\sigma_k\|s_k\|^3\\[1ex]
			\leq \|\nabla \phi(x_k)-g_k\|\cdot\|s_k\|
			+\frac{1}{2} \|\nabla ^2\phi(\tau)-\nabla ^2\phi(x_k)\|\cdot \|s_k\|^2 +
			\frac{1}{2}\|(\nabla ^2\phi(x_k)-H_k)s_k\|\cdot\|s_k\|-\frac{1}{3}\sigma_k\|s_k\|^3\\[1ex]
			\leq(\kappa_g+ \frac{\kappa_H}{2})\max \left \{\frac{\mu}{\sigma_k}, \|s_k\|^2\right\} \|s_k\|
			+\left( \frac{L_H}{2}-\frac{1}{3}\sigma_k\right)\|s_k\|^3
		\end{array}
		\]
		where the last inequality follows from the fact that 
		the iteration is true and hence \eqref{eq:true1} holds: 
		$
		\|\nabla \phi(x_k)-g_k\|\leq \kappa_g\max\left\{ \frac {\mu}{\sigma_k},\|s_k\|^2\right\}\quad {\rm and}\quad
		\|(\nabla^2 \phi(x_k)-H_k)s_k\|\leq \kappa_H \max\left\{\frac{\mu}{\sigma_k},\|s_k\|^2\right \}$ and from 
		Assumption \ref{ass:Lip_nablaf}. So as long as $\|s_k\|^2\geq \frac{\mu}{\sigma_k}$, we have
		\[
		m_k(x_k)-m_k(x_k^+)- f(x_k)+f(x_k^+)-2\epsilon_f^\prime\leq 
		\left(\kappa_g+\frac{\kappa_H}{2}
		+\frac{L_H}{2}-\frac{1}{3}\sigma_k\right)\|s_k\|^3
		= (6\kappa_g + 3L_H+3\kappa_H -2\sigma_k)\frac{1}{6}\|s_k\|^3,
		\]
		which, together with \eqref{cr:modeldecrease}, gives that
		$1-\rho_k\leq 1-\theta$ when $\sigma_k$ satisfies \eqref{sigmamax}.
		
	\end{proof}
	
	Note that for the above lemma to hold, $\bar \sigma$ does not need to include $L$ in the numerator. However, we will need another condition on $\bar \sigma$ later that will involve $L$; hence for simplicity of notation we introduced $\bar \sigma$ above to satisfy all necessary bounds.
	

	\begin{lemma}[Lower bound on step norm in terms of $\|\nabla \phi(x_k^+)\|$]\label{lem:gainontrue}
		Let Assumption \ref{ass:Lip_nablaf} hold. 
		For any realization of Algorithm \ref{alg:ARC_Random},
		if $k$ is a true iteration  we have
		\begin{equation}\label{step-length-arc}
			\max\left\{\|s_k\|^2,\frac{\mu}{\sigma_k}\right\} \geq {\frac{1-\eta}{\sigma_k+(1-\frac{\theta}{3})\bar \sigma}\|\nabla \phi(x_k^+)\|}.
		\end{equation}
	\end{lemma}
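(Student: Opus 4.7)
The plan is to upper-bound $\|\nabla \phi(x_k^+)\|$ by a multiple of $M := \max\{\|s_k\|^2,\mu/\sigma_k\}$ and then solve for $M$. I would start from the triangle inequality $\|\nabla \phi(x_k^+)\| \le \|\nabla \phi(x_k^+) - \nabla m_k(x_k^+)\| + \|\nabla m_k(x_k^+)\|$ and handle each piece separately, in the spirit of the standard ARC analysis but being careful about the stochastic gradient/Hessian errors.

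For the first piece, I would expand $\nabla \phi(x_k^+)$ by a first-order Taylor-integral of the Hessian around $x_k$, and subtract $\nabla m_k(x_k^+)=g_k+H_ks_k+\sigma_k\|s_k\|s_k$. Grouping the difference into four natural terms, namely $(\nabla\phi(x_k)-g_k)$, $(\nabla^2\phi(x_k)-H_k)s_k$, the Hessian Taylor-remainder along $s_k$, and $-\sigma_k\|s_k\|s_k$, and then applying the true-iteration bounds \eqref{eq:true1}, the Hessian Lipschitz constant $L_H$ from Assumption \ref{ass:Lip_nablaf}, and the trivial bound $\|s_k\|^2 \le M$, I would obtain $\|\nabla\phi(x_k^+) - \nabla m_k(x_k^+)\| \le (\kappa_g + \kappa_H + L_H/2 + \sigma_k) M$.

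For the second piece, the termination condition \eqref{TCs} gives $\|\nabla m_k(x_k^+)\| \le \eta \min\{1,\|s_k\|\}\|g_k\|$. To make $\|\nabla\phi(x_k^+)\|$ appear, I would bound $\|g_k\| \le \|\nabla\phi(x_k)\| + \kappa_g M \le \|\nabla\phi(x_k^+)\| + L\|s_k\| + \kappa_g M$ by the triangle inequality, the true-iteration gradient bound, and $L$-Lipschitz continuity of $\nabla\phi$. The key algebraic observation is that $\min\{1,\|s_k\|\}\|s_k\| \le \|s_k\|^2 \le M$ and $\min\{1,\|s_k\|\} \le 1$, so the $L\|s_k\|$ and $\kappa_g M$ pieces both collapse into multiples of $M$, while the $\|\nabla\phi(x_k^+)\|$ piece passes through with coefficient $\le \eta$, yielding $\|\nabla m_k(x_k^+)\| \le \eta\|\nabla\phi(x_k^+)\| + \eta(L+\kappa_g)M$.

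Summing the two bounds and moving $\eta\|\nabla\phi(x_k^+)\|$ to the left would give $(1-\eta)\|\nabla\phi(x_k^+)\| \le \bigl[\sigma_k + (1+\eta)\kappa_g + \kappa_H + L_H/2 + \eta L\bigr] M$. Since $\eta<1$ and $L_H/2 \le L_H$, the bracket is at most $\sigma_k + 2\kappa_g + \kappa_H + L + L_H = \sigma_k + (1-\theta/3)\bar\sigma$ by the definition of $\bar\sigma$ in \eqref{sigmamax}, so dividing through by $(1-\eta)$ yields the claim. I expect the main subtlety to be managing the factor $\min\{1,\|s_k\|\}$ in \eqref{TCs}: it is exactly what turns the linear-in-$\|s_k\|$ term arising from the gradient-Lipschitz step into a multiple of $M$ regardless of whether $\|s_k\|$ is above or below one; without it, the coefficient would not close, and without the inclusion of $L$ (rather than $L_H$ alone) in $\bar\sigma$ the absorption step at the end would fail.
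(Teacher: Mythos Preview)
Your proposal is correct and follows essentially the same route as the paper: the paper also starts from the triangle split $\|\nabla\phi(x_k^+)\|\le \|\nabla\phi(x_k^+)-g_k-H_ks_k\|+\sigma_k\|s_k\|^2+\eta\min\{1,\|s_k\|\}\|g_k\|$, bounds the first piece via the Taylor integral, the true-iteration conditions and $L_H$-Lipschitzness, and bounds $\|g_k\|$ back to $\|\nabla\phi(x_k^+)\|$ via $L$-Lipschitzness of $\nabla\phi$ exactly as you describe. The only cosmetic difference is that the paper extracts the $\sigma_k\|s_k\|^2$ term first rather than listing it among your ``four natural terms''; your observation about why $L$ (not only $L_H$) must appear in $\bar\sigma$ is precisely the point highlighted in the paper immediately after Lemma~\ref{p-arc:sigmamax}.
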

	\begin{proof}
		
		The triangle inequality, the equality $\nabla
		m_k(x_k+s)=g_k+H_ks+\sigma_k\|s\| s$ and condition \eqref{TCs} on $s_k$ together give
		\begin{equation}\label{eq:gradbound}
			\begin{array}{lcl}
				\|\nabla \phi(x_k^+)\|&\leq& \|\nabla \phi(x_k^+)-\nabla m_k(x_k^+)\| +
				\|\nabla m_k(x_k^+)\|\\
				&\leq &\|\nabla \phi(x_k^+)-g_k-H_ks_k\|+\sigma_k\|s_k\|^2 +\eta\min\left\{1,\|s_k\|\right\}\|g_k\|.
			\end{array}
		\end{equation}
		Recalling Taylor expansion of $\nabla \phi(x_k^+)$:
		$
		\nabla \phi(x_k^+) =\nabla \phi(x_k)+\int_0^1 \nabla ^2\phi(x_k+ts_k)s_kdt,
		$
		and applying triangle inequality again, we have
		\[
		\begin{array}{lcl}
			\|\nabla \phi(x_k^+)-g_k-H_ks_k\|&\leq& \|\nabla \phi(x_k)-g_k\|+\left\|\int_0^1
			[\nabla ^2\phi(x_k+ts_k)-\nabla ^2\phi(x_k)]s_kdt\right\| + \|\nabla ^2\phi(x_k)s_k-H_ks_k\| \\[1ex]
		&\leq &(\kappa_g+\kappa_H)\max\left\{ \frac{\mu}{\sigma_k}, \|s_k\|^2\right \}+ \frac{1}{2}L_H \|s_k\|^2,
		\end{array}
		\]
		where to get the second inequality, we also used \eqref{eq:true1}
		and Assumption \ref{ass:Lip_nablaf}.
		We can bound $\|g_k\|$ as follows:
		\[
		\|g_k\|\leq \|g_k-\nabla \phi(x_k)\|+\|\nabla \phi(x_k)-\nabla \phi(x_k^+)\|
		+\|\nabla \phi (x_k^+)\|\leq \kappa_g\max\left\{ \frac {\mu}{\sigma_k},\|s_k\|^2\right\}+L\|s_k\|+\|\nabla \phi(x_k^+)\|.
		\]
		Thus finally, we can bound all the terms on the right hand side of \eqref{eq:gradbound} in terms of $\|s_k\|^2$ and using the fact that $\eta\in (0,1)$ we can write 
		\begin{align*}
			(1-\eta)\|\nabla \phi(x_k^+)\|&\leq (2\kappa_g +\kappa_H)\max\left\{ \frac{\mu}{\sigma_k}, \|s_k\|^2\right \}
			+(L+L_H+\sigma_k)\|s_k\|^2\\
			&\leq (2\kappa_g+\kappa_H+L+L_H+\sigma_k)
			\max \left\{\frac{\mu}{\sigma_k}, \|s_k\|^2\right\},
		\end{align*}
		
		which is equivalent to \eqref{step-length-arc}.
	\end{proof}

	
	\begin{lemma}[Lower bound on step norm  until $\epsilon$-accuracy is reached]\label{p-arc:barepsilon} Let Assumption \ref{ass:Lip_nablaf} hold. 
		Consider any realization of  Algorithm \ref{alg:ARC_Random}. 
	Let $\epsilon$ satisfy 
		\begin{equation}\label{eq:bar_epsilon}
			{\mu}\leq  \frac{1-\eta}{1+\frac{(1-\frac{\theta}{3})\bar\sigma}{\sigma_{\min}}}\epsilon.
		\end{equation}
		Then on each true iteration $k$  such that $\|\nabla \phi(x_k^+)\| \geq \epsilon$,  we  have
						\[
		\|s_k\|^2\geq \frac{\mu}{\sigma_k}.
		\]
	\end{lemma}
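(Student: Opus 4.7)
The plan is to deduce this directly from \Cref{lem:gainontrue}, handling the trivial case and then arguing by contradiction on the other.  If $\|s_k\|^2 \geq \mu/\sigma_k$ the claim is immediate, so assume instead $\|s_k\|^2 < \mu/\sigma_k$.  Then the left-hand side of \eqref{step-length-arc} equals $\mu/\sigma_k$, and using $\|\nabla \phi(x_k^+)\| \geq \epsilon$ the inequality becomes
\[
\frac{\mu}{\sigma_k} \;\geq\; \frac{(1-\eta)\epsilon}{\sigma_k + (1-\tfrac{\theta}{3})\bar\sigma}.
\]
Clearing the $\sigma_k$ from the left yields
\[
\mu \;\geq\; \frac{(1-\eta)\epsilon}{1 + (1-\tfrac{\theta}{3})\bar\sigma/\sigma_k}.
\]

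To conclude, I would invoke the invariant $\sigma_k \geq \sigma_{\min}$ that is built into the update rule \eqref{regularization-update} (both branches return a value at least $\sigma_{\min}$ whenever $\sigma_k \geq \sigma_{\min}$, so this follows by induction from the initialization $\sigma_0 \geq \sigma_{\min}$).  Since the map $\sigma \mapsto (1-\tfrac{\theta}{3})\bar\sigma/\sigma$ is decreasing in $\sigma$, replacing $\sigma_k$ in the denominator by $\sigma_{\min}$ only enlarges the bound, giving
\[
\mu \;\geq\; \frac{(1-\eta)\epsilon}{1 + (1-\tfrac{\theta}{3})\bar\sigma/\sigma_{\min}},
\]
which directly contradicts the standing hypothesis \eqref{eq:bar_epsilon}.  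Hence the assumption $\|s_k\|^2 < \mu/\sigma_k$ is untenable.

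There is no real obstacle here: the content of the lemma is essentially the observation that the hypothesis \eqref{eq:bar_epsilon} was designed so that the ``small step'' branch of the $\max$ in \Cref{lem:gainontrue} cannot be the binding one before $\epsilon$-stationarity is reached.  The only mild point to watch is that the bound must hold for \emph{every} $\sigma_k$ that the algorithm can produce, which is handled by monotonicity in $\sigma_k$ together with the lower bound $\sigma_{\min}$ enforced by \eqref{regularization-update}.
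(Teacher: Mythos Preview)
Your argument is correct and is essentially the contrapositive of the paper's own proof: both invoke \Cref{lem:gainontrue} together with the invariant $\sigma_k\geq\sigma_{\min}$ to show that hypothesis \eqref{eq:bar_epsilon} forces the $\max$ in \eqref{step-length-arc} to be attained by $\|s_k\|^2$. The paper argues directly (bounding $\mu/\sigma_k$ below the right-hand side of \eqref{step-length-arc} and concluding $\|s_k\|^2$ must realize the $\max$), while you reach the same conclusion by contradiction.
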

	\begin{proof}
		
		If iteration $k$ is true and $\|\nabla \phi(x_k^+)\|> \epsilon$, then by Lemma \ref{lem:gainontrue}:
		\begin{equation*}
			\max\left\{\|s_k\|^2,\frac{\mu}{\sigma_k}\right\} \geq {\frac{1-\eta}{\sigma_k+(1-\frac{\theta}{3})\bar\sigma}\|\nabla \phi(x_k^+)\|}
			> \frac{1-\eta}{\sigma_k+(1-\frac{\theta}{3})\bar\sigma}\epsilon,
		\end{equation*}
		but since  
		\[
		{\mu}\leq  \frac{1-\eta}{1+\frac{(1-\frac{\theta}{3})\bar\sigma}{\sigma_{\min}}}\epsilon,
		\]
		so 
		\[
		\frac{\mu}{\sigma_k}
		\leq  \frac{1-\eta}{\sigma_k+\frac{(1-\frac{\theta}{3})\bar\sigma\sigma_k}{\sigma_{\min}}}\epsilon
		\leq
		\frac{1-\eta}{\sigma_k+(1-\frac{\theta}{3})\bar\sigma}\epsilon.
		\]
		Hence, we must have
		\[
		\|s_k\|^2> \frac{1-\eta}{\sigma_k+(1-\frac{\theta}{3})\bar\sigma}\epsilon \geq \frac{\mu}{\sigma_k}. 
		\]
	\end{proof}
	
	\begin{corollary}[True iteration  with large $\sigma_k$ must be successful]\label{cor1} Let Assumption \ref{ass:Lip_nablaf} hold. 
		Consider any realization of  Algorithm \ref{alg:ARC_Random}. 
		Let $\epsilon$ satisfy \eqref{eq:bar_epsilon} and
		\begin{equation*}
			\sigma_k\geq \bar \sigma= \frac{2\kappa_g+\kappa_H +L+L_H}{1-\frac{1}{3}\theta},
		\end{equation*}
		then if iteration
		$k$ is true and   $\|\nabla \phi(x_k^+)\|> \epsilon$, then iteration $k$ is successful. 
	\end{corollary}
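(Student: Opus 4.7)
The statement is essentially the combination of the two preceding lemmas, so the plan is to chain them together directly. The hypotheses of the corollary are precisely the union of the hypotheses of \Cref{p-arc:sigmamax} and \Cref{p-arc:barepsilon}, so both lemmas apply to iteration $k$.

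First I would invoke \Cref{p-arc:sigmamax}: since iteration $k$ is true and $\sigma_k \geq \bar\sigma$, the lemma gives a dichotomy — either $k$ is successful, or $\|s_k\|^2 < \mu/\sigma_k$. The goal is to rule out the second branch. Next, I would invoke \Cref{p-arc:barepsilon}: since iteration $k$ is true, $\epsilon$ satisfies \eqref{eq:bar_epsilon}, and $\|\nabla \phi(x_k^+)\| \geq \epsilon$ (which holds because we assumed $\|\nabla \phi(x_k^+)\| > \epsilon$), the lemma yields $\|s_k\|^2 \geq \mu/\sigma_k$. This directly contradicts the second branch of the dichotomy from \Cref{p-arc:sigmamax}, so iteration $k$ must be successful.

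There is no real obstacle here: the corollary is a two-line deduction from earlier results. The only thing to double-check is that the threshold $\bar\sigma$ used by both lemmas is consistent (it is, since both use exactly $\bar\sigma = (2\kappa_g + \kappa_H + L + L_H)/(1 - \tfrac{1}{3}\theta)$), and that the strict versus non-strict inequalities on $\|\nabla \phi(x_k^+)\|$ and $\|s_k\|^2$ align so that the contradiction is clean. Both alignments hold as stated.
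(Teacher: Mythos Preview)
Your proposal is correct and matches the paper's own proof, which simply says the result follows directly from Lemmas~\ref{p-arc:sigmamax} and~\ref{p-arc:barepsilon}. Your check that the strict/non-strict inequalities align so the contradiction is clean is a nice extra detail the paper leaves implicit.
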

	\begin{proof}
		
		The result is straightforward by applying Lemma \ref{p-arc:sigmamax} and \ref{p-arc:barepsilon}.
	\end{proof}
	
	
	\begin{lemma}[Minimum improvement achieved by true and successful iterations]\label{p-arc:steplength} 
	Let Assumption \ref{ass:Lip_nablaf} hold. 
		Consider any realization of  Algorithm \ref{alg:ARC_Random}. 
		Let $\epsilon$ satisfy \eqref{eq:bar_epsilon}.  Then
		on each true and successful iteration $k$ for which $\norm{\nabla \phi(x_{k+1})}> \epsilon$, we have
		\begin{align}\label{cr:fctdecrease2}
			\phi(x_k)-\phi(x_{k+1}) 
			& \geq \frac{\theta}{6}(1-\eta)^{3/2}\frac{\sigma_k}{(\sigma_k+(1-\frac{\theta}{3})\bar\sigma)^{3/2}}\|\nabla \phi(x_{k+1})\|^{3/2} -e_k-e_k^+ -2\epsilon_f^\prime \\
			& \geq \frac{\theta}{6}(1-\eta)^{3/2}\frac{\sigma_{\min}}{(\sigma_k+(1-\frac{\theta}{3})\bar\sigma)^{3/2}}\|\nabla \phi(x_{k+1})\|^{3/2} -e_k-e_k^+ -2\epsilon_f^\prime, 
		\end{align}
		where $\bar\sigma$ is defined in \eqref{sigmamax}. 
	\end{lemma}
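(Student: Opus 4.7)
The plan is to chain together three of the already-established building blocks: the per-iteration improvement bound \eqref{cr:fctdecrease1} from \Cref{p-arc:model_decrease}, the step-versus-gradient bound \eqref{step-length-arc} from \Cref{lem:gainontrue}, and the ``the max is attained by $\|s_k\|^2$'' conclusion of \Cref{p-arc:barepsilon}. Since iteration $k$ is successful, $x_{k+1}=x_k^+$, and the hypothesis $\|\nabla \phi(x_{k+1})\|>\epsilon$ translates directly into $\|\nabla \phi(x_k^+)\|>\epsilon$, which is exactly the hypothesis of the two geometric lemmas.

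First I would apply \Cref{p-arc:barepsilon}: under the standing hypothesis \eqref{eq:bar_epsilon} on $\mu$, and with $\|\nabla \phi(x_k^+)\|>\epsilon$ on a true iteration, we obtain $\|s_k\|^2\geq \mu/\sigma_k$. Then the maximum in the conclusion of \Cref{lem:gainontrue} is achieved by $\|s_k\|^2$, so
\[
\|s_k\|^2 \;\geq\; \frac{1-\eta}{\sigma_k+(1-\frac{\theta}{3})\bar\sigma}\,\|\nabla \phi(x_k^+)\|.
\]
Raising both sides to the $3/2$ power yields
\[
\|s_k\|^3 \;\geq\; \left(\frac{1-\eta}{\sigma_k+(1-\frac{\theta}{3})\bar\sigma}\right)^{3/2} \|\nabla \phi(x_{k+1})\|^{3/2}.
\]

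Next, since iteration $k$ is successful, \Cref{p-arc:model_decrease}, specifically \eqref{cr:fctdecrease1}, gives
\[
\phi(x_k)-\phi(x_{k+1})\;\geq\;\tfrac{\theta}{6}\sigma_k\|s_k\|^3 - e_k - e_k^+ - 2\epsilon_f'.
\]
Substituting the lower bound on $\|s_k\|^3$ above into this inequality produces the first displayed bound \eqref{cr:fctdecrease2}. For the second bound in the statement, I would simply use that the algorithm's update rule \eqref{regularization-update} together with $\sigma_0\geq\sigma_{\min}$ implies $\sigma_k\geq \sigma_{\min}$ for all $k$, so replacing the $\sigma_k$ that appears in the \emph{numerator} of the coefficient by $\sigma_{\min}$ only weakens the inequality.

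Essentially no step here is a real obstacle; the only thing to keep track of carefully is which $\sigma_k$ is being lower bounded. The identity $x_{k+1}=x_k^+$ that powers the whole argument is free on a successful iteration, and the $3/2$ exponent in the final bound is just what appears once one multiplies the cubic $\|s_k\|^3$ by the square-root-style lower bound on $\|s_k\|^2$ coming from \Cref{lem:gainontrue}.
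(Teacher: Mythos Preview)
Your proposal is correct and follows essentially the same route as the paper's proof: combine \Cref{p-arc:barepsilon} and \Cref{lem:gainontrue} to lower bound $\|s_k\|^3$, plug this into \eqref{cr:fctdecrease1} from \Cref{p-arc:model_decrease}, and finally invoke $\sigma_k\geq\sigma_{\min}$ for the second inequality. Your explicit remark that $x_{k+1}=x_k^+$ on a successful iteration (so that the hypothesis $\|\nabla\phi(x_{k+1})\|>\epsilon$ matches the hypothesis of \Cref{p-arc:barepsilon}) is a nice clarification that the paper leaves implicit.
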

	\begin{proof}
		
		Combining Lemma \ref{lem:gainontrue}, \ref{p-arc:barepsilon},    inequality  \eqref{cr:fctdecrease1} from Lemma \ref{p-arc:model_decrease} and the definition of successful iteration in  Algorithm \ref{alg:ARC_Random}  we have, for all  true and successful iterations $k$,
		\begin{align}\label{cr:fctdecrease4}
			\phi(x_k)-\phi(x_{k+1})
			&\geq \frac{\theta}{6}\sigma_k\|s_k\|^3 -e_k-e_k^+ -2\epsilon_f^\prime\\
			&\geq  \frac{\theta}{6}(1-\eta)^{3/2}\frac{\sigma_k}{(\sigma_k+(1-\frac{\theta}{3})\bar\sigma)^{3/2}}\|\nabla \phi(x_{k+1})\|^{3/2} -e_k-e_k^+ -2\epsilon_f^\prime.
		\end{align}
		
		Using the fact that $\sigma_k\geq \sigma_{\min}$, the result follows. 
	\end{proof}
	

We can now use these important  properties of Algorithm \ref{alg:ARC_Random} to show that stochastic process that the algorithm generates fits into the framework analyzed in \cite{jinhigh}. 
		
\subsection{Stochastic Properties of Algorithm 1} 
		
	Algorithm   \ref{alg:ARC_Random}   generates a stochastic process.
	 Let $M_{k}$ denote the collection of random variables
	$\left\{\Xi_{k}, \Xi_k^+,\Xi_{k}^{1},  \Xi_{k}^{2}\right\}$, 
	whose realizations are  
	$\left\{\xi_k, \xi_k^+, \xi_{k}^1, \xi_{k}^2 \right\}$. Let $\left\{{\cal F}_{k}: k\geq 0\right\}$
	 denote the filtration generated by  $M_{0}, M_{1}, \ldots, M_{k}$.  
		 At iteration $k$, $X_k$ denotes the random iterate,  $G_k$ is the random gradient approximation, $\mathsf{H}_k$ is the random Hessian approximation,  $\Sigma_{k}$ is the random model regularization parameter.  $S_k$ is the step computed for the random model, $f(X_k,\Xi_k) \text{ and } f(X_k^+,\Xi_k^+)$ are the random function estimates at the current point and  the  candidate point, respectively. 

\sloppypar{Conditioned on $X_k$ and $\Sigma_k$, the random quantities ${G}_k$ and $\mathsf{H}_k$ are determined by  $\Xi_{k}^{1}$ and $\Xi_{k}^{2}$ respectively.  The realization of $S_k$ depends on  the realizations of ${G}_k$ and $\mathsf{H}_k$. The function estimates $f(X_k,\Xi_k) \text{ and } f(X_k^+,\Xi_k^+)$  are determined by $\Xi_k, \Xi_k^+$, conditioned on  $X_k$ and $X_k^+$. 
In summary, the stochastic process $\left\{\left({G}_k, \mathsf{H}_k, S_k, f(X_k,\Xi_k), f(X_k^+,\Xi_k^+), X_k, \Sigma_{k}\right)\right\}$ generated by the algorithm,  with realization 
$\left\{\left(g_k, H_k, s_k, f(x_k,\xi_k), f(x_k^+,\xi_k^+), x_k, \sigma_{k}\right)\right\}$, is adapted to the filtration $\left\{{\cal F}_{k}: k\geq 0\right\}$. }

	We further define $E_k:= |f(X_k,\Xi_k)-\phi(X_k)|$ and $E_k^+:= |f(X_k^+,\Xi_k^+)-\phi(X_k^+)|$,
	with realizations $e_k$ and $e_k^+$.  
	Let $\Theta_k := \one\left\{\text{iteration $k$ is successful}\right\}$, and let $I_k := \one\left\{\text{iteration $k$ is true}\right\}$. The indicator random variables $\Theta_k$ and $I_k$ are clearly measurable with respect to the filtration ${\cal F}_{k}$. 

		

%
	

	The next lemma shows that by construction of the algorithm, the stochastic model $m_k$ at iteration $k$ is ``sufficiently accurate" with probability at least $1-\delta_1-\delta_2$. 

	\begin{lemma}\label{algo1_imply_suff_acc}
	The indicator variable 
	\begin{align*}
		J_k =  \mathbbm{1} \bigg \{
		&	\|\nabla \phi(X_k)-g(X_k,\Xi_k^1(X_k))\|\leq  \kappa_g \max\left\{ \frac {\mu}{\Sigma_k},\|S_k\|^2\right\},
			 \text{ and }\\
		& \|(\nabla^2 \phi(X_k)-{H(X_k, \Xi_k^2(X_k))})S_k\|\leq  \kappa_H
		\max\left \{\frac{\mu}{\Sigma_k}, \|S_k\|^2\right \}
		\bigg \}
	\end{align*}

	satisfies the following condition
	$$\P(J_k=1\mid \mathcal{F}_{k-1})\geq 1-\delta_1-\delta_2.$$
	\end{lemma}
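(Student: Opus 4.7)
The plan is to invoke the $\mathsf{SFO}$ and $\mathsf{SSO}$ guarantees at step $k$ with the inputs prescribed by Algorithm \ref{alg:ARC_Random}, and then combine the resulting high-probability events via a union bound. Since $X_k$ and $\Sigma_k$ are $\mathcal{F}_{k-1}$-measurable (the iterate and regularization parameter entering step $k$ are fully determined by the history up to step $k-1$), the oracle inputs $\mu_1=\mu/\Sigma_k$ and $\mu_2=\sqrt{\mu/\Sigma_k}$ are also $\mathcal{F}_{k-1}$-measurable, and the accuracy statements in \eqref{sfo} and \eqref{sso} translate directly into conditional probability bounds:
\[
\P\!\left(\|\nabla \phi(X_k)-G_k\|\leq \kappa_g\,\tfrac{\mu}{\Sigma_k}\,\Big|\,\mathcal{F}_{k-1}\right)\geq 1-\delta_1, \qquad \P\!\left(\|\nabla^2 \phi(X_k)-\mathsf{H}_k\|\leq \kappa_H\,\sqrt{\tfrac{\mu}{\Sigma_k}}\,\Big|\,\mathcal{F}_{k-1}\right)\geq 1-\delta_2.
\]

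The first bound trivially implies $\|\nabla \phi(X_k)-G_k\|\leq \kappa_g\max\{\mu/\Sigma_k,\|S_k\|^2\}$. For the Hessian, using that the operator norm dominates the induced vector norm gives
\[
\|(\nabla^2 \phi(X_k)-\mathsf{H}_k)S_k\|\leq \|\nabla^2 \phi(X_k)-\mathsf{H}_k\|\cdot\|S_k\|\leq \kappa_H\sqrt{\tfrac{\mu}{\Sigma_k}}\,\|S_k\|.
\]
The elementary inequality that closes the gap is $\sqrt{a}\,b\leq \max\{a,b^2\}$ for $a,b\geq 0$: if $b^2\geq a$ then $\sqrt{a}\,b\leq b\cdot b = b^2$, while if $b^2<a$ then $\sqrt{a}\,b<\sqrt{a}\cdot\sqrt{a}=a$. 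Applied with $a=\mu/\Sigma_k$ and $b=\|S_k\|$, this yields the second inequality in the definition of $J_k$. Importantly, this is precisely what motivates the choice $\mu_2=\sqrt{\mu/\Sigma_k}$ (rather than $\mu/\Sigma_k$) for the $\mathsf{SSO}$ input: one only needs control on $\|(\nabla^2\phi-\mathsf{H}_k)S_k\|$, not on $\|\nabla^2\phi-\mathsf{H}_k\|$ itself, and the square-root input gives the correct scaling relative to the $\max$ expression.

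A union bound finishes the argument: the intersection of the two accuracy events has conditional probability at least $1-\delta_1-\delta_2$ given $\mathcal{F}_{k-1}$, and on this intersection $J_k=1$ by the reasoning above. I do not anticipate any substantive obstacle here, since the lemma is essentially a rewriting of the oracle accuracy guarantees into the form used by the \emph{true iteration} definition. The only subtle point is that $S_k$ is itself a function of the oracle outputs $G_k,\mathsf{H}_k$, and therefore $\|S_k\|$ is unknown when the oracles are queried; the $\max\{\mu/\Sigma_k,\|S_k\|^2\}$ form of the accuracy condition, together with the elementary inequality above, is exactly what allows us to sidestep this dependence.
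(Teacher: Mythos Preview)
Your proposal is correct and follows essentially the same route as the paper: invoke the $\mathsf{SFO}$ and $\mathsf{SSO}$ guarantees with the algorithm's inputs $\mu_1=\mu/\Sigma_k$ and $\mu_2=\sqrt{\mu/\Sigma_k}$ conditional on $\mathcal{F}_{k-1}$, observe that each accuracy event implies the corresponding $\max$ inequality in $J_k$, and finish with a union bound. In fact you make explicit the elementary inequality $\sqrt{a}\,b\le\max\{a,b^2\}$ that the paper leaves implicit when passing from \eqref{eq1.2} to \eqref{eq2}.
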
 
 \begin{proof}
By the properties of oracles $\mathsf{SFO}$ and  $\mathsf{SSO}$ and the choices of the inputs for them in step 1 of the algorithm, we have:
		\begin{equation}\label{eq1}
			\P\left(\|\nabla \phi(X_k)-g(X_k,\Xi_k^1(X_k))\|\leq  \kappa_g \frac{\mu}{\Sigma_k}\mid  \mathcal{F}_{k-1} \right)\geq 1-\delta_1,
		\end{equation} and
		\begin{equation}\label{eq1.2}
			\P\left(\|(\nabla^2 \phi(X_k)-{H(X_k, \Xi_k^2(X_k))})\|\leq  \kappa_H\sqrt{\frac{\mu}{\Sigma_k}}\mid  \mathcal{F}_{k-1} \right)\geq 1-\delta_2.
		\end{equation}
		Inequality \eqref{eq1} implies 
		\begin{equation}\label{eq1.1}
		\P\left(\|\nabla \phi(X_k)-g(X_k,\Xi_k^1(X_k))\|\leq  \kappa_g \max\left\{ \frac {\mu}{\Sigma_k},\|S_k\|^2\right\}\mid  \mathcal{F}_{k-1} \right)\geq 1-\delta_1,
		\end{equation}
			and inequality \eqref{eq1.2} implies 
		\begin{equation}\label{eq2}
			\P\left(\|(\nabla^2 \phi(X_k)-{H(X_k, \Xi_k^2(X_k))})S_k\|\leq  \kappa_H
			\max\left\{\frac{\mu}{\Sigma_k}, \|S_k\|^2\right\}\mid  \mathcal{F}_{k-1} \right)\geq 1-\delta_2.
		\end{equation} 
	Thus, we conclude that
	$\P(J_k=1\mid \mathcal{F}_{k-1})\geq 1-\delta_1-\delta_2$ by the union bound.	
	\end{proof}

	Recall Definition \ref{def:true_noisy} and that we denote the event  of iteration $k$ being true by  indicator random variable $I_k$. It is crucial for our analysis that $\P(I_k=1\mid \mathcal{F}_{k-1})\geq p >\frac 1 2$ for all $k$. We will later combine Lemma \ref{algo1_imply_suff_acc} with the properties of  $\mathsf{SZO}$ for a bound on $\delta_1+ \delta_2$ to ensure $p >\frac 1 2$. 
       
	The iteration complexity  of our algorithm is defined as the following stopping time.
			\begin{definition}[First-order stopping time]		
		
		For $\epsilon > 0$,	$T_\epsilon:=\min\left\{k: \norm{\nabla \phi(X_{k}^+)} \leq \epsilon\right\} + 1$, the iteration complexity of the algorithm for reaching an  $\epsilon$-stationary point. We will refer to $T_\epsilon$ as the first-order \emph{stopping time} of the algorithm.
	\end{definition}
	It is important to note that even if for some iteration $k$, $\norm{\nabla\phi(X_{k}^+)} \leq \epsilon$, this iteration may not be successful and thus $\norm{\nabla\phi(X_{k+1})}$ may be greater than $\epsilon$. This is a consequence of the complexity analysis of cubic regularization methods that measure progress in terms of the gradient at the trial point and not at the current iterate, and thus is not specific to SARC. The stopping time is thus defined as the first time at which the algorithm {\em computes} a point at which the gradient of $\phi$ is less than $\epsilon$. 
	
	It is easy to see that $T_\epsilon$  is a \emph{stopping time} of the stochastic process with respect to ${\cal F}_k$. 
	Given a level of accuracy $\epsilon$, we aim to derive a bound on the iterations complexity $T_\epsilon$ with overwhelmingly high probability.
	In particular, we will show the number of iterations until the stopping time $T_{\epsilon}$ is a sub-exponential random variable, whose value
	(with high probability) scales as ${O}(\epsilon^{-3/2})$, similarly to the deterministic case. 
	Towards that end, we define stochastic process $Z_k$ to measure the progress towards optimality.
	\begin{definition}[Measure of Progress]
		\label{def:progress} For each $k \geq 0$, let $Z_k \geq 0$ be a random variable measuring the progress of the algorithm at step $k$:
		$Z_k = \phi(X_k) - \phi^*,$ where $\phi^*$ is a {lower bound} of $\phi$.  
	\end{definition}

	Armed with these definitions, we will be able to state properties of the stochastic process generated by Algorithm \ref{alg:ARC_Random}, which lead to the desired bounds on $T_{\epsilon}$. These properties hold under certain conditions on the parameters used by Algorithm \ref{alg:ARC_Random}. We state these conditions here.

		\begin{assumption}\label{epsilonf}
		Define $u = \epsilon_f^\prime - \epsilon_f$ and $K=\kval$ as defined in \eqref{eq:zero_order2}, and $p = 1-\delta_1-\delta_2 -\exp\left(-\min\left\{\frac{u^2}{2K^2},\frac{u}{2K}\right\}\right)$.
		\begin{description}
		\item[(a)]  $\epsilon_f^\prime>\epsilon_f$,
		\item[(b)] $\delta_1+\delta_2$ are chosen sufficiently small so that $p>\frac{1}{2}$,
		\item[(c)]  
		\begin{equation}\label{eps_lower}
		\epsilon> \max\left\{ \frac{1+\frac{(1-\frac{\theta}{3})\bar\sigma}{\sigma_{\min}}}{1-\eta}\mu, 
		\frac{((2-\frac{\theta}{3})\bar\sigma)}{1-\eta} \left (\frac{24\epsilon_f^\prime}{(p-\frac 1 2)\theta \sigma_{\min}}\right)^{\frac{2}{3}} \right\}.
		\end{equation} 
\end{description}
\end{assumption}
\begin{remark}
\Cref{epsilonf} (c) gives a lower bound on the best accuracy the algorithm is guaranteed to achieve given the accuracy parameters related to the stochastic oracles $\mathsf{SFO/SSO}$ and $\mathsf{SZO}$. Specifically, $\epsilon_f^\prime$ is lower bounded by $\epsilon_f$, which is the ``irreducible" error of the zeroth-order oracle. We observe that, if $\epsilon_f^\prime\approx \epsilon_f$ then the term involving the error of the zeroth-order oracle in the lower bound of $\epsilon$ for SARC is ${O} (\epsilon_f^{\frac 2 3} )$,
which is better dependency than those of SASS in \cite{jinhigh} and the stochastic trust region algorithms in \cite{cao2022first}, where $\epsilon$ is lower bounded by  ${ O}\left(\sqrt{\epsilon_f}\right )$.

The dependence of $\epsilon$ on $\mu$ has a somewhat more complicated interpretation: $\mu$ can be chosen arbitrarily by the algorithm, as long as oracles $\mathsf{SFO/SSO}$ can deliver appropriate accuracy. Recall that in the algorithm, the accuracy input $\mu_1$ for $\mathsf{SFO}$ is $\frac{\mu}{\sigma_k}$ and the accuracy input $\mu_2$ for $\mathsf{SSO}$ is $\sqrt{\frac{\mu}{\sigma_k}}$. If $\sigma_k$ is bounded from above by a constant, then essentially $\epsilon$ is proportional to the best accuracy required of $\mathsf{SFO}$ during the algorithm procedure and it is proportional to the square of the best accuracy required of $\mathsf{SSO}$ during the algorithm procedure. 
This dependency is the same as in deterministic inexact algorithms as well as in \cite{jinhigh} and the stochastic trust region algorithms in \cite{cao2022first}. We will comment on the existence of the upper bound on $\sigma_k$ after our main complexity result. 
\end{remark}

The following theorem establishes key properties of the stochastic process generated by Algorithm \ref{alg:ARC_Random}, that are essential for the convergence analysis.  Similar properties are used  in \cite{jinhigh}  obtain high probability iteration complexity for a stochastic step search method. 

To remain consistent with the notation used in \cite{jinhigh}, we define the random variable $A_k:=\frac{1}{\Sigma_{k}}$, with realization $\alpha_k=\frac{1}{\sigma_{k}}$, and a constant  $\bar \alpha = \frac{1}{\bar\sigma}$. 

	\begin{theorem}\label{ass:alg_behave}
		Let Assumptions \ref{ass:Lip_nablaf} and \ref{epsilonf} hold. 
		For $\bar\alpha = \frac{1}{\bar \sigma}$ and the following non-decreasing function $h: \R\to\R$: 
		$$h(\alpha) = \frac{\theta}{6}(1-\eta)^{3/2}\frac{\sigma_{\min}}{(\frac{1}{\alpha}+\frac{(1-\frac{\theta}{3})}{\bar\alpha})^{3/2}}\epsilon^{3/2},$$
		the following hold for all $k<T_\epsilon-1$:
		\begin{itemize}
			\item[(i)] $\P(I_k = 1 \mid \sF_{k-1}) \geq p$ for all $k$. (Conditioning on the past, each iteration is true with probability at least $p$.)
			\item[(ii)] If $A_k  \leq  \bar \alpha$ and $I_k=1$  then $\Theta_k=1$. (True iterations with sufficiently small $\alpha_k$ are successful.) 
		\item[(iii)]   If  $I_k\Theta_k=1$ then $Z_{k+1}\leq Z_k-h(A_k)+{4\epsilon_f^\prime}$. (True, successful iterations make progress.)
			\item[(iv)] 	$h(\bar{\alpha})>{\frac{4\epsilon_f^\prime}{p-\frac 1 2}}$. (The lower bound of potential progress for an iteration with parameter $\bar \alpha$.)
					\item[(v)] $Z_{k+1}\leq Z_k+{2\epsilon_f^\prime+ {E}_k+{E}_k^+}$ for all $k$. (The ``damage'' at each iteration is bounded.)	       
	
		\end{itemize}
	\end{theorem}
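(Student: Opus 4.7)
The plan is to dispatch items (ii)--(v) by direct appeals to the deterministic lemmas of \Cref{sec:det_props}, while (i) will require a sub-exponential concentration argument for the zeroth-order errors layered on top of \Cref{algo1_imply_suff_acc}. The key observation supporting (ii)--(iv) is that the hypothesis $k<T_\epsilon-1$ together with the stopping-time definition forces $\|\nabla\phi(X_k^+)\|>\epsilon$, so on a successful iteration where $X_{k+1}=X_k^+$ we also have $\|\nabla\phi(X_{k+1})\|>\epsilon$. Moreover, Assumption \ref{epsilonf}(c) is designed exactly so that $\mu$ satisfies the prerequisite \eqref{eq:bar_epsilon} of \Cref{p-arc:barepsilon,p-arc:steplength,cor1}, and so that $\epsilon^{3/2}$ is large enough for (iv).

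For (v), I would split on whether iteration $k$ is successful. Unsuccessful iterations give $X_{k+1}=X_k$ and the bound is immediate. On successful iterations the acceptance test \eqref{rho} together with $m_k(X_k)-m_k(X_k^+)\geq 0$ from \Cref{p-arc:model_decrease} yields $f(X_k^+)\leq f(X_k)+2\epsilon_f^\prime$; combining this with $\phi(X_{k+1})-f(X_k^+)\leq E_k^+$ and $f(X_k)-\phi(X_k)\leq E_k$ will give $\phi(X_{k+1})\leq \phi(X_k)+2\epsilon_f^\prime+E_k+E_k^+$. Item (ii) is immediate since $A_k\leq \bar\alpha$ means $\Sigma_k\geq\bar\sigma$, and together with $I_k=1$ and $\|\nabla\phi(X_k^+)\|>\epsilon$, \Cref{cor1} concludes $\Theta_k=1$. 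For (iii), I would apply \Cref{p-arc:steplength} on a true and successful iteration with $\|\nabla\phi(X_{k+1})\|>\epsilon$; recognizing that the prefactor in \eqref{cr:fctdecrease2} coincides with $h(A_k)$ after replacing $\|\nabla\phi(X_{k+1})\|$ by its lower bound $\epsilon$, and using $E_k+E_k^+\leq 2\epsilon_f^\prime$ (guaranteed by $I_k=1$), gives $Z_{k+1}\leq Z_k-h(A_k)+4\epsilon_f^\prime$. Item (iv) is a direct algebraic check: substituting $\alpha=\bar\alpha=1/\bar\sigma$ into $h$ collapses the denominator to $((2-\theta/3)\bar\sigma)^{3/2}$, and the second term inside the max of Assumption \ref{epsilonf}(c), once raised to the $3/2$ power and rearranged, is exactly $h(\bar\alpha)>\tfrac{4\epsilon_f^\prime}{p-1/2}$.

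I expect (i) to be the main obstacle. \Cref{algo1_imply_suff_acc} already contributes probability $\geq 1-\delta_1-\delta_2$ that the gradient and Hessian conditions in \Cref{def:true_noisy} hold, conditional on $\mathcal{F}_{k-1}$. What remains is to bound $\P(E_k+E_k^+>2\epsilon_f^\prime \mid \mathcal{F}_{k-1})$. The $\mathsf{SZO}$ tail bound in \eqref{eq:zero_order} implies that $E_k$ and $E_k^+$ are each sub-exponential with parameters $(\lambda,a)$ and mean bounded by $\epsilon_f$. Moreover, once the gradient and Hessian draws $\Xi_k^1,\Xi_k^2$ are realized (fixing $X_k^+$), the zeroth-order draws $\Xi_k,\Xi_k^+$ are independent, so I would apply a Bernstein-type inequality for a sum of two independent sub-exponential random variables to the centered quantity $(E_k-\mathbb{E}E_k)+(E_k^+-\mathbb{E}E_k^+)$ at deviation $2u=2(\epsilon_f^\prime-\epsilon_f)>0$, obtaining the bound $\exp(-\min\{u^2/(2K^2),u/(2K)\})$ with the effective sub-exponential scale $K=C\max\{1/\lambda,\ln 2/a\}$ that appears in Assumption \ref{epsilonf}. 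A union bound over the three failure events then yields $\P(I_k=1\mid \mathcal{F}_{k-1})\geq p$. The delicate step will be extracting the effective $K$ from the raw tail $1-e^{\lambda(a-t)}$ and handling the conditioning carefully so that the randomness of $X_k^+$ inherited from $G_k$ and $\mathsf{H}_k$ does not compromise the independence of $\Xi_k$ and $\Xi_k^+$.
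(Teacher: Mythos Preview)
Your proposal is correct and matches the paper's approach: parts (ii)--(iv) are dispatched exactly as you say via \Cref{cor1}, \Cref{p-arc:steplength}, and the algebra encoded in Assumption~\ref{epsilonf}(c); part (v) is the step-acceptance argument you outline (the paper simply cites the identical argument in \cite{jinhigh}); and for part (i) the paper's one-line proof ``follows from the assumptions on $p$ and the definition of the true iteration'' is precisely the union bound you describe, since $p$ in Assumption~\ref{epsilonf} is \emph{defined} as $1-\delta_1-\delta_2-\exp(-\min\{u^2/(2K^2),u/(2K)\})$, with the sub-exponential tail constant $K$ extracted from \eqref{eq:zero_order} via the equivalence of sub-exponential norms (the paper cites Vershynin's Proposition~2.7.1 for this in \eqref{eq:zero_order2}). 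Your anticipated ``main obstacle'' is thus already absorbed into the definition of $p$, and the conditioning issue you flag is handled exactly as you suggest: $X_k$ is $\mathcal{F}_{k-1}$-measurable and $X_k^+$ is fixed once $\Xi_k^1,\Xi_k^2$ are drawn, so the $\mathsf{SZO}$ tail bounds apply to $E_k$ and $E_k^+$ conditionally.
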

\begin{proof}
	
	Part (i) follows from the assumptions on $p$ and the definition of the true iteration.	
	
	Part (ii) follows directly from Corollary \ref{cor1}. 
	
	Part (iii) follows  from Lemma \ref{p-arc:steplength}.
	
	Part (iv)  follows from the definitions of $\bar \alpha$, $h(\alpha)$, and inequality \eqref{eps_lower}. Specifically, plugging in the definitions of $\bar\alpha$ and $h(\cdot)$, one can show that the inequality $h(\bar{\alpha}) \geq {\frac{4\epsilon_f^\prime}{p-\frac 1 2}}$ is equivalent to $ \epsilon> 		\frac{((2-\frac{\theta}{3})\bar\sigma)}{1-\eta} \left (\frac{24\epsilon_f^\prime}{(p-\frac 1 2)\theta \sigma_{\min}}\right)^{\frac{2}{3}}$, which holds by \Cref{epsilonf}.
	
	Part (v) has exactly the same proof as that of Proposition 1 part (v) in \cite{jinhigh} and is easily derived from the step acceptance condition of Algorithm \ref{alg:ARC_Random}.
\end{proof}

\subsection{High-Probability and Expected First-Order Iteration Complexity}
In \cite{jinhigh}, the authors derive a high probability bound on $T_{\epsilon}$ for a stochastic process that satisfies properties analogous to those stated in Theorem \ref{ass:alg_behave}. By applying a similar analysis to our setting, we can establish the first-order iteration complexity theorem for Algorithm \ref{alg:ARC_Random}.
One key modification however is that in our analysis, we bound the event $T_{\epsilon}\leq t+1$ rather than $T_{\epsilon}\leq t$. This difference arises from our distinct definition of the stopping time compared to \cite{jinhigh}.


	\begin{theorem}
		\label{thm:main_full}
		\sloppypar{
		Suppose Assumptions  \ref{ass:Lip_nablaf} and
		\ref{epsilonf} hold for Algorithm \ref{alg:ARC_Random}, then we have the following bound on the iteration complexity: 
		 for any $s \geq 0$, $\hat{p} \in \left( \frac12 + \frac{4\epsilon_f^\prime+s}{c_1\epsilon^{3/2}}, p\right)$, and 
		$t \geq \frac{R}{\hat{p} - \frac12 - \frac{4\epsilon_f^\prime+s}{c_1\epsilon^{3/2}}}$, we have
		$$\P\left(T_\varepsilon \leq t+1\right) \geq 1 - \exp\left(-\frac{(p-\hat{p})^2}{2p^2}t\right) - \exp\left(-\min\left\{\frac{s^2t}{{8K^2}},\frac{st}{{4K}}\right\}\right),$$
		where $c_1=\frac{\theta}{6}(1-\eta)^{3/2}\frac{\sigma_{\min}}{((2-\frac{\theta}{3})\bar\sigma)^{3/2}}$, $K=\kval$, $R = \frac{\phi(x_0)-\phi^*}{c_1\epsilon^{3/2}}+\max\left\{-\frac{\ln \alpha_0 + \ln \bar\sigma}{2\ln \gamma}, 0\right\}$, with $p$ and $\bar\sigma$ as defined previously.}
	\end{theorem}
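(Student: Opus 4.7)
The plan is to apply the abstract framework developed in \cite{jinhigh} essentially as a black box, since Theorem \ref{ass:alg_behave} has already verified the five structural properties (i)--(v) of the stochastic process $\{X_k, \Sigma_k, Z_k\}$ that the framework requires. The progress function $h(\cdot)$, the threshold $\bar\alpha$, the per-iteration damage bound, and the key compatibility inequality $h(\bar\alpha) > 4\epsilon_f^\prime/(p - \frac{1}{2})$ all play identical structural roles in that analysis. Only two auxiliary issues then remain: (a) supplying a sub-exponential moment generating function bound on $E_k, E_k^+$ so that cumulative damage can be controlled, and (b) reconciling the two stopping-time conventions.

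For (a), the tail bound in \eqref{eq:zero_order} places $|\phi(x) - f(x,\Xi(x))|$ in the standard sub-exponential regime, and Proposition 2.7.1 of \cite{vershynin2018high} yields the centered MGF bound \eqref{eq:zero_order2} with parameters $(\nu, b) = (K, K)$ where $K = C\max\{1/\lambda, \ln 2/a\}$. This is precisely the input that \cite{jinhigh} needs for a Bernstein-style tail bound on $\sum_{k=0}^{t-1}(E_k + E_k^+)$, which produces the second exponential term $\exp(-\min\{s^2 t /(8K^2), st/(4K)\})$ in the conclusion.

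With these in hand, the framework proceeds in two stages. Stage one applies Azuma's inequality to the Bernoulli-like process $\{I_k\}$ (whose conditional success probability is at least $p$ by (i)) to conclude that, with probability at least $1 - \exp(-(p-\hat p)^2 t/(2p^2))$, at least $\hat p t$ iterations in $[0, t)$ are true. Stage two is a Lyapunov argument combining $Z_k$ with $\log A_k$: properties (ii) and (iii) ensure that on true iterations either $A_k$ shrinks (via the update rule on $\Sigma_k$) or progress of at least $h(\bar\alpha) - 4\epsilon_f^\prime$ is made, and (iv) guarantees this progress dominates the expected damage $4\epsilon_f^\prime/(p - \frac{1}{2})$. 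The constant $R$ captures the bookkeeping: the $(\phi(x_0) - \phi^*)/(c_1\epsilon^{3/2})$ term absorbs the initial optimality gap, while $\max\{-(\ln\alpha_0 + \ln\bar\sigma)/(2\ln\gamma), 0\}$ accounts for the burn-in needed to drive $\Sigma_k$ above $\bar\sigma$ when starting from a small $\sigma_0$; the coefficient $c_1$ in the statement is obtained by plugging $\alpha = \bar\alpha$ into $h(\alpha)$ and simplifying.

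The only deviation from \cite{jinhigh} is that their stopping time is $\min\{k : \|\nabla\phi(X_k)\| \le \epsilon\}$, whereas here $T_\epsilon := \min\{k : \|\nabla\phi(X_k^+)\| \le \epsilon\} + 1$. Since Lemma \ref{p-arc:steplength} expresses the per-step progress bound in terms of $\|\nabla\phi(X_{k+1})\|$ (the trial-point gradient at iteration $k$), the abstract argument carries through verbatim with the event $\{\|\nabla\phi(X_k^+)\| > \epsilon\}$ replacing $\{\|\nabla\phi(X_k)\| > \epsilon\}$, and the off-by-one in the stopping-time definition produces the $t+1$ rather than $t$ in the final bound. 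The main obstacle I foresee is verifying that no constant appearing in the Lyapunov recursion of \cite{jinhigh} silently depends on a feature of that paper's step/model interpretation that differs for the cubic model here; since the properties of Theorem \ref{ass:alg_behave} have been stated in exactly the form demanded by \cite{jinhigh}, this check should reduce to matching names.
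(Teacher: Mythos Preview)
Your proposal is correct and mirrors the paper's own argument: the paper simply invokes the high-probability bound from \cite{jinhigh} as a black box once Theorem~\ref{ass:alg_behave} has established properties (i)--(v), supplementing it with exactly the two observations you identify, namely the sub-exponential MGF bound \eqref{eq:zero_order2} obtained via Proposition~2.7.1 of \cite{vershynin2018high} and the off-by-one adjustment $T_\epsilon \le t+1$ arising from the trial-point stopping-time convention. Your additional paragraph sketching the internal two-stage Azuma/Lyapunov mechanism of \cite{jinhigh} goes beyond what the paper spells out but is consistent with that framework.
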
	
	
	\begin{remark}
		\begin{enumerate}

			\item 
			Theorem \ref{thm:main_full} shows the iteration complexity of Algorithm \ref{alg:ARC_Random} is $O(\epsilon^{-3/2})$ 
			with overwhelmingly high probability, which matches the deterministic counterpart. 
			

			\item The first-order SARC algorithm encounters an $\epsilon$-stationary point in a finite number of iterations with probability $1$. This is a direct consequence of the Borel–Cantelli lemma.
			\item Since the probabilities of the failure events $\{T_{\epsilon} > t+1\}$ are exponentially decaying for all $t\geq \Theta(\epsilon^{-3/2})$, this implies a complexity bound of $O(\epsilon^{-3/2})$ in expectation for the first-order SARC, i.e.,	
			$$\E[T_\epsilon] = O(\epsilon^{-3/2}).$$
			
		\end{enumerate}
	\end{remark}

\section{Second-Order SARC}\label{sec:second_order}
 In this section, we modify the first-order SARC algorithm and extend the analysis to obtain a second-order SARC algorithm and show that it achieves an approximate second-order stationary point in $O(\epsilon^{-3/2})$ iterations with overwhelmingly high probability. 

Similar to the first-order stopping time, in \Cref{stopping2} we define a second-order stopping time for the second-order SARC algorithm as the first time at which the algorithm {\em computes} a point $x$ at which both the gradient norm and the negative of the smallest eigenvalue of the Hessian are small enough, i.e., $\|\nabla \phi(x)\| \leq \epsilon$ and $\lambda_{\text{min}}(\nabla^2\phi(x)) \geq -\sqrt{\epsilon}$. 

\begin{algorithm2e}[t]\label{alg2:ARC_Random}{\caption {Second-Order Stochastic Adaptive Regularization with Cubics (Second-Order SARC) }}
	\SetAlgoLined
	\begin{description}
		
		\item[{Input:}]\ Oracles $\mathsf{SZO}(\epsilon_f, \lambda,a)$, $\mathsf{SFO}(\kappa_g)$ and  $\mathsf{SSO}(\kappa_H)$; initial iterate $x_0$, parameters $\gamma \in (0,1)$, $\theta\in (0,1)$, $\delta_1,\delta_2\in [0,\frac 1 2)$, $\sigma_{\min}>0$, $\eta, {\eta_2} \in (0,1)$, $\mu \geq 0,\epsilon_f^\prime>0$ and $\sigma_0\geq\sigma_{\min}$. 
		
		\item[{Repeat for $k=0, 1, \ldots$}]\ 
		
		\item[{\quad  1. Compute a model trial step $s_k$:}]\ 
		Generate $g_{k}=g(x_k,\xi_{k}^1)$ and $H_{k}=H(x_k,\xi_{k}^2)$ using $\mathsf{SFO}(\kappa_g)$ and $\mathsf{SSO}(\kappa_H)$ with $\left(\min\left\{\frac {\mu}{\sigma_k}, \frac {\mu}{\sigma_k^2} \right\},\delta_1\right)$, and $\left(\min\left\{\sqrt{\frac {\mu}{\sigma_k}}, \sqrt{\frac {\mu}{\sigma_k^2}} \right\},\delta_2\right)$ as inputs, respectively.
		Compute  a trial step $s_k$ that satisfies \eqref{s-calc}, \eqref{TCs} and \eqref{new_cr} with parameters $\eta$, {$\eta_2$} and $\sigma_k$ at $x_k$.
		
		\item[{\quad Steps 2–4 }] are identical to Algorithm \ref{alg:ARC_Random} and omitted for brevity.
	\end{description}
\end{algorithm2e}

	\begin{definition}[Second-order stopping time]\label{stopping2}
	
	For $\epsilon > 0$, let 
	$$T_{\epsilon,\sqrt{\epsilon}}:=\min\left\{k: \norm{\nabla \phi(X_{k}^+)} \leq \epsilon \text{ and } \lambda_{\text{min}}(\nabla^2\phi(x)) \geq -\sqrt{\epsilon} \right\} + 1$$ be the iteration complexity of the algorithm for reaching an  $(\epsilon,\sqrt{\epsilon})$ second-order stationary point. We will refer to $T_{\epsilon,\sqrt{\epsilon}}$ as the \emph{second-order stopping time} of the algorithm.
\end{definition}

Before we introduce the second-order SARC algorithm, we will first introduce an additional criterion that the approximate minimizer of the cubic subproblem must satisfy in the second-order algorithm, which is essential for establishing the second-order analysis.
The following proposition, which appears as Proposition 1 in \cite{nesterov2006cubic}, provides a key property of exact solutions to the cubic subproblem.

\begin{proposition}\label{key_second}
	For any exact solution to the cubic subproblem, the following matrix inequality holds:
	$$H_k+\frac{1}{2}\sigma_{k}\norm{s_k}I \succeq 0.$$
\end{proposition}

This matrix inequality can be equivalently expressed in terms of the minimum eigenvalue:
\begin{equation*}
	\lambda_{\text{min}}(H_k)+\frac{1}{2}\sigma_{k}\norm{s_k} \geq 0.
\end{equation*}

Rearranging this inequality yields a lower bound on the step size:
\begin{equation}\label{eq_key}
	\norm{s_k} \geq \frac{-2\lambda_{\text{min}}(H_k)}{\sigma_{k}}.
\end{equation}

Any exact solution of the cubic subproblem satisfies \eqref{eq_key}. However, to allow the cubic subproblem to be solved approximately, we introduce a parameter $\eta_2\in(0,1]$ to define the new condition \eqref{new_cr} to be satisfied when solving the cubic subproblem approximately, which relaxes \eqref{eq_key}:
\begin{equation}\label{new_cr}
	\norm{s_k} \geq \eta_2\frac{-2\lambda_{\text{min}}(H_k)}{\sigma_{k}}, \text{ where } \eta_2 \in (0,1). 
\end{equation}
When $\eta_2=1$, this reduces to \eqref{eq_key}, corresponding to an exact solution. In second order SARC, the approximate solution of the cubic subproblem must satisfy both the conditions from first-order SARC and this additional condition \eqref{new_cr}. The parameter $\eta_2$ is a user-defined input - as $\eta_2$ approaches 1, each iteration can potentially make more progress, though finding an acceptable $s_k$ may take longer.

For solving the cubic subproblem approximately, Algorithm \ref{alg2:ARC_Random} can use methods from \cite{cagotoPI,carmon2019gradient} to minimize model \eqref{cubic}, or Nesterov's algorithm from \cite{nesterov2006cubic} to find a global minimizer.

The second-order SARC algorithm is described in Algorithm \ref{alg2:ARC_Random}. The key differences between the first-order and second-order SARC algorithms are:
\begin{enumerate}
	\item The second-order algorithm requires possibly higher accuracy from both the first- and second-order oracles.
	\item The cubic subproblem is solved with stricter termination conditions.
\end{enumerate}
As a result, all the lemmas and propositions established for the first-order SARC algorithm in Section \ref{sec:first_order} still hold for the second-order SARC algorithm.
We can define notions similarly as in the first-order setting.


\begin{definition}[True iteration for second-order SARC]
	\label{def:true_noisy_2} 
	We say that iteration $k$ is \textbf{true} for Algorithm \ref{alg2:ARC_Random} if   
	\begin{align}
		\|\nabla \phi(x_k)-g_k\|\leq \kappa_g & \max\left\{ \min\left\{\frac {\mu}{\sigma_k}, \frac {\mu}{\sigma_k^2} \right\},\|s_k\|^2\right\} \text{, }
		\|(\nabla^2 \phi(x_k)-H_k)s_k\|\leq \kappa_H \max\left\{ \min\left\{\frac {\mu}{\sigma_k}, \frac {\mu}{\sigma_k^2} \right\},\|s_k\|^2\right\} \label{eq:true_second} \\
		& \text{ and }
		e_k+e_k^+ \leq 2\epsilon_f^\prime.
	\end{align}
\end{definition}

Similar to the first-order analysis in Section \ref{sec:first_order}, we can show that the following lemma holds for the second-order setting.

\begin{lemma}\label{algo2_imply_suff_acc}
	The indicator variable 
	\begin{align*}
		J_k =  \mathbbm{1} \bigg \{
		&	\|\nabla \phi(X_k)-g(X_k,\Xi_k^1(X_k))\|\leq  \kappa_g \max\left\{\min\left\{ \frac {\mu}{\Sigma_k},  \frac {\mu}{\Sigma_k^2} \right\},\|S_k\|^2\right\},
		\text{ and }\\
		& \|(\nabla^2 \phi(X_k)-{H(X_k, \Xi_k^2(X_k))})S_k\|\leq  \kappa_H
		\max\left \{\min\left\{ \frac {\mu}{\Sigma_k},  \frac {\mu}{\Sigma_k^2} \right\}, \|S_k\|^2\right \}
		\bigg \}
\end{align*}
satisfies the following submartingale-like condition
$$\P(J_k=1\mid \mathcal{F}_{k-1})\geq 1-\delta_1-\delta_2.$$
\end{lemma} 

For the second-order analysis, the assumptions on the parameters are as follows.
\begin{assumption}\label{epsilonf2}
	Define $u = \epsilon_f^\prime - \epsilon_f$ and $K=\kval$ as defined in \eqref{eq:zero_order2}, and $p = 1-\delta_1-\delta_2 -\exp\left(-\min\left\{\frac{u^2}{2K^2},\frac{u}{2K}\right\}\right)$.
	\begin{description}
		\item[(a)]  $\epsilon_f^\prime>\epsilon_f$,
		\item[(b)] $\delta_1+\delta_2$ are chosen sufficiently small so that $p>\frac{1}{2}$,
		\item[(c)]  
		\begin{equation}\label{eps_lower2}
			\epsilon> \max\left\{ 
			\max\left\{\frac{1+\frac{(1-\frac{\theta}{3})\bar\sigma}{\sigma_{\min}}}{1-\eta}, \frac{4}{\sigma_{\min}^2},  \left( \frac 1 {{\eta_2}}+\frac{2L_H}{\bar\sigma}\right)^2 \right\}\mu, 
			\max\left\{\frac{2-\frac{\theta}{3}\bar\sigma}{1-\eta}, (\bar\sigma{/\eta_2}+2L_H)^2\right\} \left(\frac{24\epsilon_f^\prime}{(p-\frac 1 2)\theta \sigma_{\min}}\right)^{\frac{2}{3}}
			\right\}.
		\end{equation} 
	\end{description}
\end{assumption}

Similar to the first-order analysis, we want to show that properties analogous to those in \Cref{ass:alg_behave} hold. Before proving the theorem, we will first examine how the lemmas used in proving \Cref{ass:alg_behave} change in the second-order SARC algorithm.
By following a similar argument as in the first-order analysis, we can establish \Cref{p-arc2:sigmamax} for Algorithm \ref{alg2:ARC_Random}. This lemma shows that when the regularization parameter $\sigma_k$ is sufficiently large, either the iteration is successful or the step size is small.

\begin{lemma}[Large $\sigma_k$ guarantees success or small step]  \label{p-arc2:sigmamax}
	Let Assumption \ref{ass:Lip_nablaf}, and \ref{epsilonf2} hold. 
	For any realization of  Algorithm \ref{alg2:ARC_Random}, if iteration
	$k$ is true,  and if
	\begin{equation}
		\sigma_k\geq  \bar \sigma= \frac{2\kappa_g+\kappa_H +L+L_H}{1-\frac{1}{3}\theta},
	\end{equation}
	then iteration $k$ is either successful or produces $s_k$ such that $\|s_k\|^2< \min\left\{\frac{\mu}{\sigma_k}, \frac{\mu}{\sigma_k^2}
	\right\}$. 
\end{lemma}

We now obtain a lemma that provides a lower bound for $\norm{s_k}^2$ in the second-order SARC algorithm for all $k<T_{\epsilon, \sqrt{\epsilon}}-1$.

\begin{lemma}[Lower bound on step norm  until $(\epsilon,\sqrt{\epsilon})$-accuracy is reached]\label{norm_bound_second} Let Assumption \ref{ass:Lip_nablaf} and \ref{epsilonf2} hold. 
	Consider any realization of  Algorithm \ref{alg2:ARC_Random}, 
for any true iteration $k$, with $k<T_{\epsilon, \sqrt{\epsilon}}-1$, we have
\[
\|s_k\|^2\geq \min \left\{ \frac{1-\eta}{\sigma_k+\left(1-\frac{\theta}{3}\right) \bar{\sigma}}\epsilon, \frac{{\epsilon}}
{(\sigma_{k}{/\eta_2}+2L_H)^2}  \right\}.
\] 
Specifically, for any true iteration $k$ such that $\|\nabla \phi(x_k^+)\| \geq \epsilon$,  we  have
	\[
\|s_k\|^2\geq 
\frac{1-\eta}{\sigma_k+\left(1-\frac{\theta}{3}\right) \bar{\sigma}}\epsilon,
\]
and for any true iteration $k$  such that $\lambda_{\text{min}}(\nabla^2\phi(x_k^+))<-\sqrt{\epsilon}$,  we  have
\begin{equation}\label{part_2_of_h_fun}
	\norm{s_k}^2\geq 
	\frac{{\epsilon}}
	{(\sigma_{k}{/\eta_2}+2L_H)^2}.
\end{equation}
\end{lemma}
\begin{proof}
Suppose the iteration is true as in \Cref{def:true_noisy_2}, then:
\begin{align*}
	\lambda_{\text{min}}(\nabla^2\phi(x_k^+))
&\geq 
	\lambda_{\text{min}}(\nabla^2\phi(x_k)) -L_H\norm{s_k}\\
&\geq 	\lambda_{\text{min}}(H_k)-L_H\norm{s_k}-\min\left\{\sqrt{\frac {\mu}{\sigma_k}}, \sqrt{\frac {\mu}{\sigma_k^2}} \right\}\\
&\geq -\frac 1 {2{\eta_2}} \sigma_{k}\norm{s_k}-L_H\norm{s_k}-\min\left\{\sqrt{\frac {\mu}{\sigma_k}}, \sqrt{\frac {\mu}{\sigma_k^2}} \right\}.
\end{align*}
The first inequality comes from Hessian being Lipschitz, the second inequality is from the accuracy requirement of the second-order oracle, and the third inequality is by \Cref{new_cr}.

Hence, we have $-\lambda_{\text{min}}(\nabla^2\phi(x_k^+))\leq \frac 1 {2{\eta_2}} \sigma_{k}\norm{s_k}+L_H\norm{s_k}+\min\left\{\sqrt{\frac {\mu}{\sigma_k}}, \sqrt{\frac {\mu}{\sigma_k^2}} \right\}.$
Rearrange the above inequality, we get:
 $-\lambda_{\text{min}}(\nabla^2\phi(x_k^+))
 -\min\left\{\sqrt{\frac {\mu}{\sigma_k}}, \sqrt{\frac {\mu}{\sigma_k^2}} \right\}
 \leq 
 (\frac 1 {2{\eta_2}} \sigma_{k}
 +L_H)\norm{s_k},$ or
\begin{equation*}
 \norm{s_k}\geq \frac{-\lambda_{\text{min}}(\nabla^2\phi(x_k^+))
 	-\min\left\{\sqrt{\frac {\mu}{\sigma_k}}, \sqrt{\frac {\mu}{\sigma_k^2}} \right\}}{\frac 1 {2{\eta_2}} \sigma_{k}
 	+L_H}.
\end{equation*}
If the second-order stationary point has not been reached, then either $\nabla\phi(x_k^+)>\epsilon$ or  $\lambda_{\text{min}}(\nabla^2\phi(x_k^+))<-\sqrt{\epsilon}$. In the former case, the analysis falls back to the first-order analysis. In the later case we have:  $-\lambda_{\text{min}}(\nabla^2\phi(x_k^+))>\sqrt{\epsilon}$, and 
the above becomes:
\begin{equation*}
	\norm{s_k}\geq 
	\frac{\sqrt{\epsilon}
		-\min\left\{\sqrt{\frac {\mu}{\sigma_k}}, \sqrt{\frac {\mu}{\sigma_k^2}} \right\}}{\frac 1 {2{\eta_2}} \sigma_{k}
		+L_H}
	\geq 
	\frac{\sqrt{\epsilon}
		-\min\left\{\sqrt{\frac {\mu}{\sigma_{\min}}}, \sqrt{\frac {\mu}{\sigma_{\min}^2}} \right\}}{\frac 1 {2{\eta_2}} \sigma_{k}
		+L_H}
	\geq 
	\frac{\sqrt{\epsilon}
		-\sqrt{\frac {\mu}{\sigma_{\min}^2}}}{\frac 1 {2{\eta_2}} \sigma_{k}
		+L_H}
	.
\end{equation*}
By \Cref{epsilonf2}(c), we have
{
	\begin{equation}\label{add_eps_cond1}
		\epsilon\geq \frac{4\mu}{\sigma_{\min}^2}.
	\end{equation}
} We obtain $\sqrt{\frac {\mu}{\sigma_{\min}^2}}\leq \frac{\sqrt{\epsilon}}{2}$. Hence, we obtain the key inequality:
\begin{equation}\label{part_2_of_h_fun}
	\norm{s_k}\geq 
	\frac{{\sqrt{\epsilon}}}
	{\sigma_{k}{/\eta_2}+2L_H}.
\end{equation}
The result follows by squaring both sides.
\end{proof}

\begin{lemma}[Minimum improvement achieved by true and successful iterations for second-order SARC]\label{lemma9}
	Let \Cref{ass:Lip_nablaf} and \ref{epsilonf2} hold. Consider any realization of Algorithm \ref{alg2:ARC_Random}. For each iteration $k$ that is true and successful and $k<T_{\epsilon, \sqrt{\epsilon}}-1$, we have 	
	
		\begin{align*}
			\phi(x_k)-\phi(x_{k+1}) & \geq  \min\left\{
			\frac{\theta}{6}(1-\eta)^{3/2}\frac{\sigma_{k}}{(\sigma_{k}+{(1-\frac{\theta}{3})}\bar\sigma)^{3/2}},
			\frac{\theta}{6}\frac{{\sigma_{k}}}
			{(\frac{\sigma_k}{{\eta_2}}+2L_H)^3 }
			\right\}{\epsilon}^{3/2} -4\epsilon_f^\prime\\
			& \geq  \min\left\{
			\frac{\theta}{6}(1-\eta)^{3/2}\frac{\sigma_{\min}}{(\sigma_{k}+{(1-\frac{\theta}{3})}\bar\sigma)^{3/2}},
			\frac{\theta}{6}\frac{{\sigma_{\min}}}
			{(\frac{\sigma_k}{{\eta_2}}+2L_H)^3 }
			\right\}{\epsilon}^{3/2} -4\epsilon_f^\prime.
		\end{align*} 
	\end{lemma}

\begin{proof}

	By \Cref{norm_bound_second}, we have the following lower bound for $\norm{s_k}^2$ on any true and successful iteration $k$, where $k<T_{\epsilon, \sqrt{\epsilon}}-1$:
	
	\[
	\|s_k\|^2\geq 
	\min \left\{ \frac{1-\eta}{\sigma_k+\left(1-\frac{\theta}{3}\right) \bar{\sigma}}, 
	\frac{1}
	{(\sigma_{k}{/\eta_2}+2L_H)^2}  \right\}\epsilon.
	\]
	
	Combining the above inequality with \Cref{cr:fctdecrease1} in \Cref{p-arc:model_decrease}, together with the fact that iteration $k$ is true, we obtain the result.
	
\end{proof}

	\begin{lemma}[True iteration  with large $\sigma_k$ must be successful]\label{cor2} Let \Cref{ass:Lip_nablaf} and \ref{epsilonf2} hold. 
	Consider any realization of  Algorithm \ref{alg2:ARC_Random}. 
	Let 
	\begin{equation*}
		\sigma_k\geq \bar \sigma= \frac{2\kappa_g+\kappa_H +L+L_H}{1-\frac{1}{3}\theta},
	\end{equation*}
	then if iteration
	$k$ is true and $k<T_{\epsilon, \sqrt{\epsilon}}-1$, then iteration $k$ is successful. 
\end{lemma}
\begin{proof}

By \Cref{p-arc2:sigmamax}, we know that to complete the proof, it suffices to show that for all $k<T_{\epsilon, \sqrt{\epsilon}}-1$, 
\begin{align*}
    \|s_k\|^2 \geq \min\left\{\frac{\mu}{\sigma_k}, \frac{\mu}{\sigma_k^2}\right\}.
\end{align*}

By the first-order analysis, we have that if $\epsilon$ satisfy \Cref{epsilonf2} (and hence \Cref{epsilonf}), then for any true iteration such that  $\nabla\phi(x^+_k) \geq \epsilon$, we have $\norm{s_k}^2\geq \frac{\mu}{\sigma_{k}}\geq  \min\left\{\frac{\mu}{\sigma_k}, \frac{\mu}{\sigma_k^2}
\right\}$. On the other hand, by \Cref{norm_bound_second}, if the true iteration $k$ has $\lambda_{\text{min}}(\nabla^2\phi(X_{k}^+))\leq -\sqrt{\epsilon}$, we know 
$\norm{s_k}^2\geq 
\frac{{\epsilon}}
{(\sigma_{k}{/\eta_2}+2L_H)^2}.$
By \Cref{epsilonf2}(c), we have: 
{$\epsilon \geq \mu \left( \frac{1}{\eta_2}+\frac{2L_H}{\bar\sigma}\right)^2.
$}
As a result, for any $\sigma_k\geq \bar\sigma$, $\norm{s_k}^2\geq 
\frac{{\epsilon}}
{(\sigma_{k}{/\eta_2}+2L_H)^2}\geq \frac{\mu}{\sigma_{k}^2}\geq  \min\left\{\frac{\mu}{\sigma_k}, \frac{\mu}{\sigma_k^2}\right\}$.
This completes the proof. 
\end{proof}

Now, we are ready to state \Cref{ass2:alg_behave} for the second-order analysis which is analogous to \Cref{ass:alg_behave} for the first-order analysis. 
\begin{theorem}\label{ass2:alg_behave}
	Let Assumptions \ref{ass:Lip_nablaf} and \ref{epsilonf2} hold. 
	For $\bar\alpha = \frac{1}{\bar \sigma}$ and the following non-decreasing function $h: \R\to\R$: 
	$$	h(\alpha) = \min\left\{
	\frac{\theta}{6}(1-\eta)^{3/2}\frac{\sigma_{\min}}{(\frac{1}{\alpha}+\frac{(1-\frac{\theta}{3})}{\bar\alpha})^{3/2}}\epsilon^{3/2},
	\frac{\theta}{6}\frac{{\sigma_{\min}}}
	{(\frac{1}{\alpha{\eta_2}}+2L_H)^3 }{\epsilon}^{3/2}
	\right\},$$
	the following hold for all $k<T_{\epsilon,\sqrt{\epsilon}}-1$:
	\begin{itemize}
		\item[(i)] $\P(I_k = 1 \mid \sF_{k-1}) \geq p$ for all $k$. (Conditioning on the past, each iteration is true with probability at least $p$.)
		\item[(ii)] If $A_k  \leq  \bar \alpha$ and $I_k=1$  then $\Theta_k=1$. (True iterations with sufficiently small $\alpha_k$ are successful.) 
		\item[(iii)]   If  $I_k\Theta_k=1$ then $Z_{k+1}\leq Z_k-h(A_k)+{4\epsilon_f^\prime}$. (True, successful iterations make progress.)
		\item[(iv)] 	$h(\bar{\alpha})>{\frac{4\epsilon_f^\prime}{p-\frac 1 2}}$. (The lower bound of potential progress for an iteration with parameter $\bar \alpha$.)
		\item[(v)] $Z_{k+1}\leq Z_k+{2\epsilon_f^\prime+ {E}_k+{E}_k^+}$ for all $k$. (The ``damage'' at each iteration is bounded.)	       
		
	\end{itemize}
\end{theorem}

\begin{proof}
The proofs for parts (i) and (v) are the same as in the proof of \Cref{ass:alg_behave}. 

Part (ii) is a direct consequence of \Cref{cor2}.

By \Cref{lemma9}, we see that (iii) holds with 
\begin{equation}
	h(\alpha) = \min\left\{
	\frac{\theta}{6}(1-\eta)^{3/2}\frac{\sigma_{\min}}{(\frac{1}{\alpha}+\frac{(1-\frac{\theta}{3})}{\bar\alpha})^{3/2}}\epsilon^{3/2},
	\frac{\theta}{6}\frac{{\sigma_{\min}}}
	{(\frac{1}{\alpha{\eta_2}}+2L_H)^3 }{\epsilon}^{3/2}
	\right\}.
\end{equation} 

Part (iv) is a direct consequence of \Cref{epsilonf2}(c), together with the updated function of $h(\cdot)$.

\end{proof}
\subsection{High-Probability and Expected Second-Order Iteration Complexity}
Using an argument similar to the first-order analysis, we obtain the following main iteration complexity bound for the second-order SARC.
	
\begin{theorem}
	\label{thm2:main_full}
	\sloppypar{
		Suppose Assumptions  \ref{ass:Lip_nablaf} and
		\ref{epsilonf2} hold for Algorithm \ref{alg2:ARC_Random}, then we have the following bound on the iteration complexity: 
		for any $s \geq 0$, $\hat{p} \in \left( \frac12 + \frac{4\epsilon_f^\prime+s}{c_2\epsilon^{3/2}}, p\right)$, and 
		$t \geq \frac{R}{\hat{p} - \frac12 - \frac{4\epsilon_f^\prime+s}{c_2\epsilon^{3/2}}}$, we have
		$$\P\left(T_{\epsilon,\sqrt{\epsilon}} \leq t+1\right) \geq 1 - \exp\left(-\frac{(p-\hat{p})^2}{2p^2}t\right) - \exp\left(-\min\left\{\frac{s^2t}{{8K^2}},\frac{st}{{4K}}\right\}\right),$$
		where $c_2=\min\left\{
		\frac{\theta}{6}(1-\eta)^{3/2}\frac{\sigma_{\min}}{((2-\frac{\theta}{3})\bar\sigma)^{3/2}},
		\frac{\theta}{6}\frac{{\sigma_{\min}}}
		{(\bar\sigma{/\eta_2}+2L_H)^3 }
		\right\}$, $K=\kval$, $R = \frac{\phi(x_0)-\phi^*}{c_2\epsilon^{3/2}}+\max\left\{-\frac{\ln \alpha_0 + \ln \bar\sigma}{2\ln \gamma}, 0\right\}$, with $p$ and $\bar\sigma$ as defined previously.}
\end{theorem}	

\begin{remark}
	\begin{enumerate}

		\item 
		Theorem \ref{thm2:main_full} shows the iteration complexity of Algorithm \ref{alg2:ARC_Random} is $O(\epsilon^{-3/2})$ 
		with overwhelmingly high probability, which matches the deterministic counterpart. 
		

		\item The second-order SARC algorithm encounters an  $(\epsilon,\sqrt{\epsilon})$ second-order stationary point in a finite number of iterations with probability $1$. This is a direct consequence of the Borel–Cantelli lemma.
		\item Since the probabilities of the failure events $\{T_{\epsilon,\sqrt{\epsilon}} > t+1\}$ are exponentially decaying for all $t\geq \Theta(\epsilon^{-3/2})$, this implies a complexity bound of $O(\epsilon^{-3/2})$ in expectation for second-order SARC, i.e.
		$$
		\E[T_{\epsilon,\sqrt{\epsilon}}] = O(\epsilon^{-3/2}).
		$$
		
	\end{enumerate}
\end{remark}

\section{Sample Complexity}\label{sec:sample_complexity}
In addition to the iteration complexity, we also provide total sample complexity (or total oracle complexity) bounds for the algorithms. The total sample complexity of an algorithm is defined as the sum of the number of times the algorithm evaluates the function, the gradient, and the Hessian at some point.

Let's consider the problem of \textbf{Expectation minimization}:
$$
\min_{x\in \R^m} \phi(x)=\E_{d \sim \mathcal{D}}[l(x,d)].
$$ 
Here, $x \in \R^m$ denotes the model parameters, $d$ represents a data sample drawn from distribution $\mathcal{D}$, and $l(x,d)$ measures the loss incurred when the model with parameters $x$ is evaluated on data point $d$. This formulation encompasses supervised machine learning problems and appears in other domains such as simulation optimization \cite{kim2014guide}. We assume $\phi$ satisfies the assumptions of the paper, and that the function value $l(x,d)$, gradient $\nabla_x l(x, d)$ and Hessian $\nabla_x^2 l(x, d)$ can be computed for any $d \sim \mathcal{D}$. Moreover, we assume the following conditions hold:
\begin{itemize}
\item  $\ell(x, d)-\phi(x)$ is a 
subexponential random variable with parameters $\lambda$ and $a$ that satisfies \eqref{eq:zero_order}, and there is some $v_f \geq 0$ such that for all $x$, 
\begin{equation}\label{eq:erm_var_f}
\mathbb{E}_{d \sim \mathcal{D}}\left[\left(l(x, d)-\phi(x)\right)^2\right] \leq v_f^2;
\end{equation}
\item $\mathbb{E}_{d \sim \mathcal{D}}\left[\nabla_x \ell(x, d)\right]=\nabla \phi(x)$, and there is some $v_g \geq 0$ such that for all $x$,
\begin{equation}\label{eq:erm_var_g}
\mathbb{E}_{d \sim \mathcal{D}}\left\|\nabla_x \ell(x, d)-\nabla \phi(x)\right\|^2 \leq v_g^2.
\end{equation}
\item $\mathbb{E}_{d \sim \mathcal{D}}\left[\nabla_x^2 \ell(x, d)\right]=\nabla^2 \phi(x)$, and there is some $v_h \geq 0$ such that for all $x$,
\begin{equation}\label{eq:erm_var_h}
\mathbb{E}_{d \sim \mathcal{D}}\left\|\nabla_x^2 \ell(x, d)-\nabla^2 \phi(x)\right\|^2 \leq v_h^2.
\end{equation}
\end{itemize}


In this setting, the zeroth-, first- and second-order oracles are typically computed using minibatches:
\begin{equation}\label{eq:erm_oracle}
	f(x, \mathcal{S}_0) = \frac{1}{|\mathcal{S}_0|}\sum_{d\in \mathcal{S}_0}l(x,d),\quad g(x, \mathcal{S}_1) = \frac{1}{|\mathcal{S}_1|}\sum_{d\in \mathcal{S}_1}\nabla_x l(x,d), \quad H(x, \mathcal{S}_2) = \frac{1}{|\mathcal{S}_2|}\sum_{d\in \mathcal{S}_2}\nabla_x^2 l(x,d)
\end{equation}
where $\mathcal{S}_0, \mathcal{S}_1, \mathcal{S}_2$ represent minibatches - sets of i.i.d. samples drawn from distribution $\mathcal{D}$. The size of $\mathcal{S}_0, \mathcal{S}_1, \mathcal{S}_2$ may be chosen adaptively based on $x$.

To analyze the computational cost, we define some key metrics:
\begin{itemize}
    \item The \emph{total function value sample complexity} $\mathrm{TOC}_0$: The number of times the algorithm evaluates $l(x,d)$ for specific $x$ and $d$
    \item The \emph{total gradient sample complexity} $\mathrm{TOC}_1$: The number of times the algorithm computes $\nabla l(x,d)$
    \item The \emph{total Hessian sample complexity} $\mathrm{TOC}_2$: The number of times the algorithm computes $\nabla^2 l(x,d)$
\end{itemize}
The total sample complexity\footnote{The sample complexity considered here only accounts for the minibatch sizes used and does not depend on the dimension of $x$, which we treat as a constant.} (or total oracle complexity) of the algorithm in $n$ iterations is then defined as the sum of these quantities.
$$
\operatorname{TOC}(n)=\mathrm{TOC}_0(n)+\mathrm{TOC}_1(n)+\mathrm{TOC}_2(n).
$$



For both versions of the SARC algorithms, we observe that the oracle accuracy requirements change adaptively with the penalty parameter $\sigma_k$. It is clear that in the above expectation minimization setting, where a stochastic oracle is delivered via sample averaging, a higher accuracy requirement leads to higher oracle complexity. Therefore, to bound the total oracle complexity of the algorithm, we need to bound the accuracy requirement over the iterations, or equivalently, provide an upper bound for the parameter $\sigma_k$.

\medskip\noindent\textbf{Upper Bound on $\sigma_k$.}
While the penalty parameters $\Sigma_k$  form a stochastic process, this process has some nice properties. Specifically it is upper bounded by  a one-sided geometric  random walk. This random walk is analyzed in  \cite{jinhigh2} and it is shown that for any given number of iterations $t$, 
and for $\gamma$ chosen appropriately dependent on $t$, $\max_{1\leq k\leq t}\{\sigma_k\}\leq { O}(\bar \sigma)$ with high probability. 
A consequence of this fact is that with high probability, for a realization of Algorithm \ref{alg:ARC_Random} or \ref{alg2:ARC_Random}, there exist lower bounds on all of the accuracy requirements $\mu_1$ and $\mu_2$, which are inputs to the oracles $\mathsf{SFO}$ and $\mathsf{SSO}$ as in \eqref{sfo} and \eqref{sso}. 
This, in turn, can give total ``sample" complexity bounds for Algorithms \ref{alg:ARC_Random} and \ref{alg2:ARC_Random}. 

Theorem 3 from \cite{jinhigh2} is included here for completeness. It is easy to see that the theorem assumptions are satisfied by both versions of SARC algorithms and the expectation minimization problem setting.
The cost of an oracle call may depend on the step size parameter $\alpha_k=\frac{1}{\sigma_k}$ and the probability parameter $1-\delta_i$, thus we denote the cost of the zeroth-order oracle, first-order oracle and second-order oracle by $\mathrm{oc}_0(\alpha_k, 1-\delta_0)$, $\mathrm{oc}_1(\alpha_k, 1-\delta_1)$ and $\mathrm{oc}_2(\alpha_k, 1-\delta_2)$ respectively. 
We will use $\mathrm{oc}_0(\alpha_k)$, $\mathrm{oc}_1(\alpha_k)$ and $\mathrm{oc}_2(\alpha_k)$ in the paper to simplify the notation because without loss of generality, $\delta_i$ can be treated as constants in the algorithms. Let $\mathrm{oc}\left(\alpha_k\right)= \mathrm{oc}_0\left(\alpha_k\right)+
\mathrm{oc}_1\left(\alpha_k\right)+ \mathrm{oc}_2\left(\alpha_k\right)$.
Hence, 
$$\operatorname{TOC}_0(n)=\sum_{k=1}^n \mathrm{oc}_0\left({A}_k\right), \quad \operatorname{TOC}_1(n)=\sum_{k=1}^n \mathrm{oc}_1\left({A}_k\right), \quad  \operatorname{TOC}_2(n)=\sum_{k=1}^n \mathrm{oc}_2\left({A}_k\right),$$
$$\quad \text{and}\quad \operatorname{TOC}(n)=\sum_{k=1}^n \mathrm{oc}\left({A}_k\right).$$
Note that $\operatorname{TOC}(n)$ is a random variable since it depends on the random stepsize parameters $A_k$. The following theorem from \cite{jinhigh2} provides a bound on $\operatorname{TOC}(T_{\epsilon})$ by first analyzing $\alpha^*(n)$, the high probability minimum stepsize parameter in $n$ iterations, and then using $\mathrm{oc}\left(\alpha^*(n)\right)$ to bound the total sample complexity.

\begin{theorem}[Theorem 3 in \cite{jinhigh2}]\label{thm:toc_bound} For any $\omega>0$ and positive integer $n$, with probability at least $1-\mathbb{P}\left(T_{\varepsilon}>n\right)-n^{-\omega}-c n^{-(1+\omega)}$,

$$
\operatorname{TOC}\left(T_{\varepsilon}\right) \leq n \cdot \mathrm{oc}\left(\alpha^*(n)\right)
$$

where $\alpha^*(n)=\bar{\alpha} \gamma n^{-(1+\omega) \log _{1 / 2 q} 1 / \gamma}$, where $c=\frac{2 \sqrt{p q}}{(1-2 \sqrt{p q})^2}$ and $q=1-p$.
If $\gamma$ is chosen to be at least $\max \left\{\frac{1}{2},\left(\frac{1}{2 q}\right)^{\frac{\log (2 \beta)}{(1+\omega) \log n}}\right\}$ for some $\beta<\frac{1}{2}$, then $\alpha^*(n) \geq$ $\beta \bar{\alpha}$, thus with probability at least $1-\mathbb{P}\left(T_{\varepsilon}>n\right)-n^{-\omega}-c n^{-(1+\omega)}$,
$$
\mathrm{TOC}\left(T_{\varepsilon}\right) \leq n \cdot \mathrm{oc}(\beta \bar{\alpha}).
$$
\end{theorem}
For the remainder of this paper, we will use superscripts \( ^1 \) and \( ^2 \) to denote the oracle complexity related quantities for the first-order and second-order SARC algorithms, respectively.
\subsection{High Probability First-Order and Second-Order Sample Complexity Result}
The zeroth-, first- and second-order oracle conditions can be satisfied by choosing appropriate minibatch sizes for both SARC algorithms.

For the \textbf{first-order} SARC algorithm, by definition of the oracles $\mathsf{SZO}, \mathsf{SFO}$ and $\mathsf{SSO}$, the assumptions of the expectation minimization problem, as in \cref{eq:erm_var_f}, \cref{eq:erm_var_g}, \cref{eq:erm_var_h}, \Cref{epsilonf} and Chebyshev's inequality, we have:
$$\mathrm{oc}^1_1(\alpha_k) = \frac{v_g^2}{\delta_1 \kappa_{g}^2\mu^2 \alpha_k^2}= O\left(\frac{v_g^2}{\alpha_k^2\mu^2}\right)= O\left(\frac{v_g^2}{\alpha_k^2\epsilon^2}\right),$$ 
$$\mathrm{oc}^1_2(\alpha_k) =\frac{v_h^2}{\delta_2 \kappa_{h}^2\mu \alpha_k}= O\left(\frac{v_h^2}{\alpha_k\mu}\right)= O\left(\frac{v_h^2}{\alpha_k\epsilon}\right).$$
Using \Cref{epsilonf} and a similar argument as in Proposition 7 of \cite{jinhigh} shows that a sufficiently accurate zeroth-order oracle can be obtained for a nonconvex problem by using a minibatch of size:
$$\mathrm{oc}^1_0(\alpha_k)  = O\left(\frac{v_f^2}{\epsilon_f^2}\right) = O\left(\frac{v_f^2}{\epsilon^3}\right).$$ 

Similarly, for the \textbf{second-order} SARC algorithm, by \Cref{epsilonf2}, we have:
$$\mathrm{oc}^2_1(\alpha_k) = \frac{v_g^2}{\delta_1 \kappa_{g}^2\mu^2 \min\{\alpha_k^2, \alpha_k^4\}}= O\left(\frac{v_g^2}{\min\{\alpha_k^2, \alpha_k^4\}\mu^2}\right)= O\left(\frac{v_g^2}{\min\{\alpha_k^2, \alpha_k^4\}\epsilon^2}\right),$$ 
$$\mathrm{oc}^2_2(\alpha_k) = \frac{v_h^2}{\delta_2 \kappa_{h}^2\mu \min\{\alpha_k, \alpha_k^2\}}= O\left(\frac{v_h^2}{\min\{\alpha_k, \alpha_k^2\}\mu}\right)= O\left(\frac{v_h^2}{\min\{\alpha_k, \alpha_k^2\}\epsilon}\right),$$
$$\mathrm{oc}^2_0(\alpha_k) = O\left(\frac{v_f^2}{\epsilon_f^2}\right) = O\left(\frac{v_f^2}{\epsilon^3}\right).$$
We can now apply Theorem \ref{thm:toc_bound}, and conclude that with high probability, before the stopping time is reached, the smallest $\alpha_k$ above is at least $\beta \bar{\alpha}$, which is a constant. Hence, we can obtain the following result with high probability.


\begin{itemize}
	\item Total sample complexity for the zeroth-order oracle: 
	For both the first-order and second-order SARC algorithms, each iteration requires $O\left(\frac{v_f^2}{\eps^{3}}\right)$ samples for the function value estimates. Since there are $O\left(\frac{1}{\eps^{1.5}}\right)$ iterations in total, the overall sample complexity for the zeroth-order oracle is $O\left(\frac{v_f^2}{\eps^{4.5}}\right)$. 
	\item Total sample complexity for the first-order oracle: 
	For both the first-order and second-order SARC algorithms, each iteration requires $O\left(\frac{v_g^2}{\epsilon^2}\right)$ samples. Multiplying by the $O\left(\frac{1}{\eps^{1.5}}\right)$ total number of iterations, we obtain an overall sample complexity of $O\left(\frac{v_g^2}{\eps^{3.5}}\right)$.

	\item Total sample complexity for the second-order oracle: 
For both the first-order and second-order SARC algorithms, each iteration requires $O\left(\frac{v_h^2}{\epsilon}\right)$ samples. Therefore, the total sample complexity for the second-order oracle is $O\left(\frac{v_h^2}{\eps^{2.5}}\right)$.
\end{itemize}

Formally, we obtain the following high probability total sample complexity result for both the first-order and second-order SARC algorithms.
\begin{theorem}[High Probability Total Sample Complexity of both the first-order and second-order SARC] 
	\label{thm:sarc_hp}
	For algorithms \ref{alg:ARC_Random} and \ref{alg2:ARC_Random} applied to expectation minimization problem, let $n \geq R_1$ and $n \geq R_2$ for the first-order and second-order SARC algorithms respectively, where $R_1 =\frac{R}{\hat{p}-\frac{1}{2}-\frac{4 \epsilon_f^{\prime}+s}{c_1 \epsilon^{3 / 2}}}$ and $R_2 =\frac{R}{\hat{p}-\frac{1}{2}-\frac{4 \epsilon_f^{\prime}+s}{c_2 \epsilon^{3 / 2}}}$.
	For any $\omega  > 0$, $\gamma \in \left[\,\max\left\{\frac12, \left(\frac{1}{2q}\right)^{\frac{\log(2\beta)}{(1+\omega)\log n} } \right\}\,, 1\right)$, where $\beta$ is any constant smaller than $\frac12$, we have with probability at least $1-2\exp\left(-C_3n\right) - {\cal O}(n^{-\omega })$, where $C_3 =\min\left\{\frac{(p-\hat{p})^2}{2 p^2},\min \left\{\frac{s^2}{8 K^2}, \frac{s}{4 K}\right\}\right\}$, 
	\begin{equation}
		\SC(T_{\eps})  \leq {\cal O}\left(\left(\frac{1}{\eps^{\frac 3 2}}+\log_{1/\gamma} \frac{\alpha_0}{\bar{\alpha}}\right)\cdot\left( \frac{v_f^2}{\eps^3}  + \frac{v_g^2}{\eps^2} + \frac{v_h^2}{\eps} \right)\right).
	\end{equation}
	
	If $\gamma \in \left[\, \max\left\{\frac12, \left(\frac{1}{2q}\right)^{\frac{\log(2\beta)}{(1+\omega)\log n} } \right\}, \left(\frac{\bar{\alpha}}{\alpha_0}\right)^{\frac{1}{cn}}\,\right], $ where $c$ is any constant in $(0,1)$, the above simplifies to
	\begin{equation}
		\SC(T_{\eps})  \leq {\cal O}\left(\frac{1}{\eps^{\frac 3 2}}\cdot\left( \frac{v_f^2}{\eps^3}  + \frac{v_g^2}{\eps^2} + \frac{v_h^2}{\eps}\right)\right).
	\end{equation}
\end{theorem}

\begin{remark}
Comparing the sample complexity of the first-order and second-order SARC algorithms with the results in \cite{tripuraneni2018stochastic}:
\begin{itemize}
	\item The total sample complexity for the \emph{first-order} oracle in both the first-order and second-order SARC algorithms is $O\left(\frac{1}{\eps^{3.5}}\right)$ which matches the result of \cite{tripuraneni2018stochastic}.
	\item The total sample complexity for the \emph{second-order} oracle in both the first-order and second-order SARC algorithms is $O\left(\frac{1}{\eps^{2.5}}\right)$, while in \cite{tripuraneni2018stochastic}, the algorithm uses Hessian-vector products instead of Hessian estimates and the total number of Hessian-vector products required in \cite{tripuraneni2018stochastic} is $O\left(\frac{1}{\eps^{3.5}}\right)$.
	\item The total sample complexity for the \emph{zeroth-order} oracle in both the first-order and second-order SARC algorithms is $O\left(\frac{1}{\eps^{4.5}}\right)$.
	In contrast, the algorithm in \cite{tripuraneni2018stochastic} does not utilize function value estimates. While their algorithm benefits from not requiring function value estimates, the SARC algorithms offer the advantages of being adaptive and operate under weaker assumptions regarding the first- and second-order oracles. Specifically, SARC algorithms allows the estimates to be biased and can be arbitrary corrupted in each iteration with a constant probability, while the gradient and Hessian estimates in \cite{tripuraneni2018stochastic} are assumed to have bounded variance and bounded error almost surely.
\end{itemize}
\end{remark}

	\bibliographystyle{alpha}
        \bibliography{ref-random_ls}

\end{document}